\newtheorem{theorem}{Theorem}[section]
\newtheorem{fact}[theorem]{Fact}
\newtheorem{corollary}[theorem]{Corollary}
\newtheorem{lemma}[theorem]{Lemma}
\newtheorem{proposition}[theorem]{Proposition}
\newtheorem{question}[theorem]{Question}
\newtheorem{definition}[theorem]{Definition}
\numberwithin{equation}{section}
\newtheorem{prob}[theorem]{Problem} 
\theoremstyle{remark}
\newtheorem{remark}[theorem]{Remark}
\newtheorem{example}[theorem]{Example}
\newcommand{\ben}{\begin{enumerate}}
\newcommand{\een}{\end{enumerate}}
\newcommand{\bit}{\begin{itemize}}
\newcommand{\eit}{\end{itemize}}
\def\R {{\mathbb R}}
\def\Q {{\mathbb Q}}
\def \F {{\mathbb F}}
\def\N{{\mathbb N}}
\def\T{{\mathbb T}}
\def\Z {{\mathbb Z}}
\def\Homeo{{\mathrm{Homeo}}\,}
\def\QED{\nobreak\quad\ifmmode\roman{Q.E.D.}\else{\rm Q.E.D.}\fi}
\def\s {\sigma}
\def\g {\gamma}
\def\t {\tau}
\def\sk {\vskip 0.3cm}
\def\GL{\operatorname{GL}}
\def\UT{\operatorname{UI}}
\def\UT{\operatorname{UT}}
\def\Aut{\operatorname{Aut}}
\begin{document}

\title[]{Key subgroups in the Polish group of all automorphisms of the rational circle} 

		\author[]{Michael Megrelishvili}
		\address[M. Megrelishvili]
	{\hfill\break Department of Mathematics
		\hfill\break
		Bar-Ilan University, 52900 Ramat-Gan	
		\hfill\break
		Israel}
		\email{megereli@math.biu.ac.il}


\subjclass[2020]{22A05, 54H11, 37B05, 54H15, 06F30, 43A07}
 
\keywords{circular order, inj-key subgroup, minimal topological group, universal minimal flow, Polish group, extreme amenability} 

 \dedicatory{Dedicated to Vladimir Pestov on the occasion of his 70th birthday}
 \thanks{Supported by the Gelbart Research Institute at the Department of Mathematics, Bar-Ilan  University}

 \begin{abstract}
 	Extending some results of a joint work with E.  Glasner, we continue to study the Polish group
 	$G:=\Aut(\mathbb Q_0)$ of all circular order preserving permutations
 	of the rational circle $\mathbb Q_0=\mathbb Q/\mathbb Z$, endowed with the pointwise
 	topology. 
 	We show that the point stabilizers $H=G_q$ are extremely amenable \textit{inj-key} subgroups of $G$ (that is, they distinguish coarser Hausdorff group topologies on $G$), but are not co-minimal
 	in $G$).  
 	
 	These examples answer a question posed in a 
 	joint work with M. Shlossberg and are inspired by a question of V. Pestov concerning Polish groups with metrizable universal
 	minimal flow. It remains an open problem to study Pestov's question in its full generality.
 \end{abstract}
\maketitle
	

\section{Introduction}  

The Polish group $G:=\Aut (\Q_0)$ of all circular order preserving permutations of $\Q_0$ has several remarkable dynamical 
properties. Recall that by results of \cite{GM-UltraHom} its universal minimal $G$-flow $M(G)$ is dynamically small, being a circularly ordered metrizable compactum. Hence, $M(G)$ is a tame dynamical system, \cite{GM-CircOrd18,GM-TC}.   
One of the motivations of \cite{GM-UltraHom} was to study generalized amenability of topological groups. 
 
 Geometrically, $M(G)$ can be obtained by splitting all rational points of $\Q_0=\Q/\Z$ on the circle $\T=\R/\Z$.  This result is, in some sense, a ``circular analog" of a well-known seminal result of V. Pestov \cite{Pest98} about extreme amenability of the Polish group $\Aut(\Q,\leq)$ of all linear order preserving permutations of $\Q$. 

Note that there are several additional cases 
where $M(G)$ is a circularly ordered $G$-flow. 
For example, according to a result of L. Nguyen van Th\'{e} in \cite{van-the}, this happens for 
the	Polish groups $\Aut(\mathbf{S}(2))$ and 
$\Aut(\mathbf{S}(3))$ of automorphisms 
of the circular directed graphs $\mathbf{S}(2)$ and $\mathbf{S}(3)$.  

In the present article, we study some new properties (both dynamical and topological) of $\Aut (\Q_0)$. 
One of our goals is to resolve a question posed in a recent joint work with M. Shlossberg \cite{MeSh-key} concerning natural examples of inj-key subgroups that are not co-minimal (Problem \ref{p:main} below).

A subgroup $H$ of a Hausdorff topological group $(G,\g)$ is said to be \textit{inj-key} (see Definition \ref{d:inj-key}) if $H$ distinguishes 
weaker Hausdorff group topologies $\s \subseteq \g$ on $G$. 
That is, when the restriction map  
$$r_H \colon \mathcal{T}_{\downarrow}(G) \to \mathcal{T}_{\downarrow}(H), \ \s \mapsto \s|_H$$ is \textbf{injective}, where $\mathcal{T}_{\downarrow}(G)$ is the sup-semilattice of all coarser Hausdorff group topologies on $G$. 
Recall also the notion of co-minimality introduced in a joint work with D. Dikranjan \cite{DM10}. $H$ is \textit{co-minimal} (Definition \ref{d:co-min}) in $G$ means that 
for every $\s \in \mathcal{T}_{\downarrow}(G)$ the coset topology $\s/H$ on $G/H$ is just the original coset topology $\g/H$. 

According to results of \cite{DM10} and \cite{MeSh-key}, many remarkable subgroups are co-minimal. 
For example, the center $H:=Z(G)$ of $G$ is co-minimal for 
$G:=\UT (n,\F)$, the matrix group of all unitriangular matrices, 
and also for the general linear group $G:=\GL(n,\F)$, 
 where $\F$ is a local field.  

Every co-minimal subgroup is inj-key (Fact \ref{p:inj-key=co-min for central}.1). 
In the present work   
we give examples of inj-key subgroups which are not co-minimal. 
More precisely, we show that for every \(q\in\Q_0\), the stabilizer
\(G_q\) is an extremely amenable inj-key subgroup of \(\Aut(\Q_0)\),
but is not co-minimal in \(\Aut(\Q_0)\) (Theorem  \ref{t:counterexample}).  
This justifies the concept of \textit{inj-key subgroups} and answers the following general question posed in \cite{MeSh-key}.

\begin{prob} \label{p:main}  \cite{MeSh-key}  
	Find examples of inj-key subgroups which are not  co-minimal. 
\end{prob}

 Looking for a suitable example, we followed 
V. Pestov's suggestion to examine Polish groups $G$ with \textbf{metrizable} \textit{universal minimal $G$-flow} $M(G)$. 
 The following question was suggested 
by V. Pestov during the online conference, \textit{Algebra, Topology and Their Interactions}, July 2024.  
\begin{question} \label{q:Pest} (Pestov) 
	Let $G$ be a Polish group with a (extremely amenable) subgroup $H$ such that the universal minimal $G$-flow $M(G)$ is a  metrizable completion of $G/H$.
	 Under which conditions is $H$ inj-key in $G$? When is $H$  inj-key but not co-minimal?
\end{question}  

   
Recall that 
	Polish groups with metrizable $M(G)$ are very important in several 
	research trends (for instance, in abstract topological dynamics; see \cite{KPT,BMT,Zucker14,BZ}.  
	By a well-known result \cite{BMT},   
	for such $G$ there exists a closed subgroup $H$ such that $M(G)$ is the completion of the coset $G$-space $G/H$. By another result \cite[Prop. 3.3]{MNT}, $H$ must be extremely amenable.     
	
 Such topological groups frequently are minimal. 
 See, for example:  
 \begin{enumerate}
 	\item Glasner--Weiss \cite{GW-02} with $G=S_{\N}$ the symmetric group. 
 	\item Glasner--Weiss \cite{GW-03}, $G=H(2^{\N})$  homeomorphism group of the Cantor set. 
 	\item Pestov \cite{Pe-nbook,Pest98}, $G=\Homeo_+(\T)$.
 	\item By results of Kwiatkowska \cite{Kwi18} and 
 	Duchesne \cite{Duch20} the homeomorphism groups $G:=H(W_S)$ of Wazewski dendrites (for finite $S$) have metrizable $M(G)$. This group is minimal (moreover, its topology is the minimum Hausdorff group topology) by \cite[Theorem 1.7]{Duch20}.  
 \end{enumerate}  
 
In minimal groups, every subgroup is co-minimal. 
Thus, this case is too restrictive to provide an answer to Problem \ref{p:main} and Question \ref{q:Pest}. 
 However, groups with metrizable $M(G)$, perhaps are close to being minimal as the present example of $\Aut (\Q_0)$ hints. Here we treat the presence of an inj-key but not co-minimal subgroup as a kind of minimality property.  

Theorem~\ref{t:counterexample} gives a concrete response to Problem~\ref{p:main}. The example was found
by examining the geometry of the metrizable universal minimal flow of
\(\Aut(\mathbb Q_0)\), following the direction suggested in Question~\ref{q:Pest}. 
The role of Question~\ref{q:Pest} in the present paper is motivational: it leads to the
study of stabilizers arising from the natural compactifications of
\(\Aut(\mathbb Q_0)\). The rational stabilizers provide the desired inj-key
but non-co-minimal examples.

One of the important ingredients in the proof is a theorem of Chang--Gartside
\cite[Theorem 8]{CG} about minimum Hausdorff group topologies.
We also use, as in \cite{GM-UltraHom}, the circular ultrahomogeneity
of the relevant actions and the geometric description of certain greatest
\(G\)-compactifications via inverse limits of finite circularly ordered sets.
For instance, if \(G=\operatorname{Aut}(\mathbb Q_0)\) and \(H=\operatorname{St}(q_0)\),
then the greatest \(G\)-compactification \(\beta_G(G/H)\) is a circularly ordered
metrizable compact space obtained from the circle \(\mathbb T\) by replacing
each rational point \(q\in\mathbb Q_0\) by an ordered triple
\(q^-,q,q^+\).

It is an open problem to study Pestov's question in its full generality. We believe that, in general, there are many interesting cases of key (inj-key) subgroups in remarkable geometrically defined topological groups (not necessarily responding to Question \ref{q:Pest}). See a discussion and open questions in Section \ref{s:Q}.   



	\vskip 0.3 cm
\noindent \textbf{Acknowledgment.}  
Many thanks to Eli Glasner and Menachem Shlossberg for the inspiration while writing joint works \cite{GM-UltraHom} and  \cite{MeSh-key} respectively. 
I am grateful to Vladimir Pestov for proposing to examine topological groups with metrizable universal minimal flow as a possible counterexample (Question \ref{q:Pest}).

\sk 
\section{Some definitions and useful properties}   

In what follows, all topological spaces are assumed to be Tychonoff. In particular, all topological groups are Hausdorff. 
For every topological group $(G,\g)$ and its subgroup $H,$ denote by $G/H:=\{gH: g \in G\}$ the \textit{coset $G$-space} endowed with the usual quotient topology $\g/H$ and the natural continuous action $G \times G/H \to G/H$.  Denote by $N_e(G)$ (or, $N_e(G,\g)$) the set of all $\g$-neighborhoods of the neutral element $e \in G$.   

The following system of entourages 
$$
\widetilde{U}:=\{(xH,yH) \in G/H \times G/H: \ \ 
x \in UyH \} \ \ \
(U \in N_e(G))
$$
is a base of a topologically compatible uniformity $\mathcal{U}_r(G/H)$ on the coset space
$G/H$, the so-called \emph{right uniformity}, \cite[Theorem 5.21]{RD}. 
The projection $q\colon G \to G/H$ is a continuous open $G$-map, uniform with respect to the right uniformities $\mathcal{U}_r(G)$ and $\mathcal{U}_r(G/H)$. 
\sk   
For a topological group $(G,\g)$, denote by $\mathcal{T}_{\downarrow}(G,\g)$ (or, simply $\mathcal{T}_{\downarrow}(G)$) the poset (in fact, sup-semilattice) of all coarser Hausdorff group topologies $\t \subseteq \g$ on $(G,\g)$.   
The extreme case of the singleton, where $\mathcal{T}_{\downarrow}(G,\gamma)=\{\gamma\}$, is equivalent to the minimality of $(G,\gamma)$.

 Minimal topological groups are in widespread use in mathematics. 
 There is an extensive literature about them.  
See for example, the survey paper \cite{DM14} (and references therein) and also \cite{DPS89,DM10, MS-Fermat,MeSh-key}. 
Recall the following useful relaxations of the minimality condition for subgroups.


\begin{definition} \label{d:co-min} 
Recall that a subgroup \(H\) of \(G\) is said to be \emph{relatively minimal}
\cite{MEG04} (respectively, \emph{co-minimal} \cite{DM10}) in \(G\) if every
coarser Hausdorff group topology
\(\sigma\in\mathcal T_{\downarrow}(G,\gamma)\) induces on \(H\) the original
topology, that is, \(\sigma|_H=\gamma|_H\) (respectively, the quotient topology
\(\sigma/H\) on \(G/H\) coincides with \(\gamma/H\)).
\end{definition}



Clearly, in minimal groups any subgroup is both relatively minimal and co-minimal. 
 A dense subgroup is always co-minimal by \cite[Lemma 3.5.3]{DM10}.

\sk  
In Definition \ref{d:inj-key}, we recall two additional minimality conditions introduced very recently in a joint work with M. Shlossberg \cite{MeSh-key}.

For every pair $\s, \t \in \mathcal{T}_{\downarrow}(G,\g),$ we have $\sup\{\s,\t\} \in \mathcal{T}_{\downarrow}(G,\g)$. 
The following family 
\begin{equation} \label{eq:sup-top} 
	\{U \cap V: \ \ U \in N_e(G, \s), V \in N_e(G, \t)\}
\end{equation} 
constitutes a local base of $\sup\{\s,\t\}$ 
at the neutral element $e \in G$.   
If $H$ is a subgroup of $G$, then we have a natural order preserving restriction map 
$$r_H \colon \mathcal{T}_{\downarrow}(G) \to \mathcal{T}_{\downarrow}(H), \ \s \mapsto \s|_H.$$  
Furthermore, it is a morphism of sup-semilattices because   
for every pair $\s,\t \in \mathcal{T}_{\downarrow}(G)$ we have (using  Equation \ref{eq:sup-top}) 
$
\sup\{r_H(\s),r_H(\t)\}=r_H(\sup\{\s,\t\}). 
$

\begin{definition} \label{d:inj-key} \cite{MeSh-key} 
	Let $H$ be a subgroup of a topological group $(G,\gamma)$.  
	\begin{enumerate}
		\item $H$ is a 
		\textbf{key subgroup} of $G$ if for every 
		$\s \in \mathcal{T}_{\downarrow}(G)$ 
		 the coincidence $\s|_H = \gamma|_H$ 
		implies that  $\s = \gamma$. 
		\item $H$ is 	
		an \textbf{inj-key subgroup} of $G$ if for every pair 
		$\s_1,\s_2 \in \mathcal{T}_{\downarrow}(G)$ the coincidence $\s_1|_H = \s_2|_H$ 
		implies that  $\s_1 = \s_2$. Equivalently, this means that $$r_H \colon \mathcal{T}_{\downarrow}(G) \to \mathcal{T}_{\downarrow}(H), \ \s \mapsto \s|_H$$ is \textbf{injective}.  
	\end{enumerate}  
\end{definition}

We have the implications   

\centerline{\textbf{co-minimal $\Rightarrow$ inj-key $\Rightarrow$ key.}}  

\noindent 
If $G$ is minimal then every subgroup is co-minimal in $G$. 
The concept of key subgroups appears implicitly in some earlier publications \cite{MEG95, DM10, MS-Fermat}. Recall some  results from \cite{MeSh-key}.  
If $H$ is a central subgroup, then $r_H$ is onto and so 
the injectivity of $r_H$ can be replaced by bijectivity and we get an isomorphism of semilattices.  In general, $r_H$ need not be onto (even for abelian subgroups of index 2).


Recall that a closed subgroup $H$ of $G$ is said to be \textit{co-compact} if  the coset $G$-space $G/H$ is compact.  
In general, $r_H$ need not be injective even when $H$ is a  key subgroup. For example, a co-compact subgroup $\Z^n$ is key in $\R^n$ but not inj-key 
(see \cite{MeSh-key}).   

If $G$ is locally compact abelian, then its subgroup $H$ is inj-key (equivalently, co-minimal) if and only if $H$ is co-compact. There exists a discrete abelian countable infinite (non-minimal) group $G$ containing a proper co-minimal subgroup $H$. 

A topological group with complete two-sided uniformity ${\mathcal U}_{l \vee r}$ is said to be \textit{Raikov complete}. Every locally compact or Polish group is Raikov complete. 
A closed subgroup $H$ of $(G,\g)$ is said to be \textit{strongly closed} (in the sense of \cite{DM10}) if $H$ remains closed in $(G,\t)$ for every  $\t \in \mathcal{T}_{\downarrow}(G,\g)$.


%


\begin{fact} \label{p:inj-key=co-min for central} \cite{MeSh-key} 
	\begin{enumerate}
		\item Every co-minimal subgroup is inj-key.  
		\item Let $H$ be a central subgroup of $(G,\g)$. Then $H$ is inj-key iff $H$ is co-minimal in $G$. 	 
\item Let $H$ be a co-compact Raikov complete subgroup in $G$.   Then $H$ is a key subgroup of $G$. 
\item Let $H$ be a strongly closed subgroup in $G$  which is co-compact and Raikov complete. Then $H$ is co-minimal.  
	\end{enumerate}	
\end{fact}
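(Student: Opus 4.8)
The backbone of all three parts is a single \emph{three-space lemma} for comparable topologies, which I would isolate first: if $\sigma\subseteq\tau$ are Hausdorff group topologies on $G$, $H\le G$, and both $\sigma|_H=\tau|_H$ and $\sigma/H=\tau/H$ hold, then $\sigma=\tau$. To prove it, it suffices to show $\mathrm{id}\colon (G,\sigma)\to(G,\tau)$ is continuous at $e$, i.e.\ that $x_i\xrightarrow{\sigma}e$ forces $x_i\xrightarrow{\tau}e$; by the subnet criterion I only need, in every subnet, a $\tau$-convergent further subnet. Applying $q$ gives $q(x_i)\to q(e)$ in $\sigma/H=\tau/H$, and since $q\colon (G,\tau)\to(G/H,\tau/H)$ is continuous, open and onto, the net-lifting property of open surjections yields a subnet $(x_{i_j})$ and a net $z_j\xrightarrow{\tau}e$ with $q(z_j)=q(x_{i_j})$. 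Then $h_j:=z_j^{-1}x_{i_j}\in H$; as $z_j\xrightarrow{\sigma}e$ and $x_{i_j}\xrightarrow{\sigma}e$ we get $h_j\xrightarrow{\sigma}e$, and since $\sigma|_H=\tau|_H$ this forces $h_j\xrightarrow{\tau}e$, whence $x_{i_j}=z_jh_j\xrightarrow{\tau}e$. This lifting step, where equality of the quotient topologies is used to ``correct'' a $\tau$-small coset representative, is the real crux; everything else reduces to it. Throughout I will freely use that $q$ is open for every group topology, that restriction commutes with $\sup$ (Equation \ref{eq:sup-top}), and that $\sigma\subseteq\gamma$ always gives $\sigma/H\subseteq\gamma/H$.

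For part (1), given a co-minimal $H$ and $\sigma_1,\sigma_2\in\mathcal{T}_{\downarrow}(G)$ with $\sigma_1|_H=\sigma_2|_H$, I set $\tau:=\sup\{\sigma_1,\sigma_2\}\in\mathcal{T}_{\downarrow}(G)$. Then $\tau|_H=\sup\{\sigma_1|_H,\sigma_2|_H\}=\sigma_1|_H$, while co-minimality forces $\sigma_1/H=\gamma/H=\tau/H$. The three-space lemma applied to $\sigma_1\subseteq\tau$ gives $\sigma_1=\tau$, and symmetrically $\sigma_2=\tau$, so $\sigma_1=\sigma_2$; thus $r_H$ is injective.

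For part (2), the implication from co-minimal to inj-key is part (1). For the converse I argue contrapositively, using only that a central $H$ is normal. Suppose $H$ is not co-minimal, so there is $\sigma\in\mathcal{T}_{\downarrow}(G)$ with $\sigma/H\subsetneq\gamma/H$. Let $\delta$ be the initial group topology induced on $G$ by the homomorphism $q\colon G\to (G/H,\gamma/H)$; then $\delta\subseteq\gamma$, while $\delta|_H$ is indiscrete because $H=\ker q$. Put $\sigma_1:=\sup\{\sigma,\delta\}$. It lies in $\mathcal{T}_{\downarrow}(G)$ (Hausdorff, as it contains $\sigma$, and coarser than $\gamma$), and $\sigma_1|_H=\sup\{\sigma|_H,\delta|_H\}=\sigma|_H$. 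On the other hand $q\colon (G,\sigma_1)\to(G/H,\gamma/H)$ is continuous, so $\gamma/H\subseteq\sigma_1/H\subseteq\gamma/H$, i.e.\ $\sigma_1/H=\gamma/H\ne\sigma/H$. Hence $\sigma_1\ne\sigma$ yet $r_H(\sigma_1)=r_H(\sigma)$, so $r_H$ is not injective and $H$ is not inj-key.

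For part (3), suppose $\sigma\in\mathcal{T}_{\downarrow}(G)$ satisfies $\sigma|_H=\gamma|_H$; I must show $\sigma=\gamma$. Since $\sigma|_H=\gamma|_H$, the group $(H,\sigma|_H)$ is Raikov complete, and its two-sided uniformity coincides with the subspace two-sided uniformity inherited from $(G,\sigma)$; as a complete subspace of the Hausdorff uniform space $(G,\sigma)$, $H$ is therefore $\sigma$-closed, so $\sigma/H$ is Hausdorff. Co-compactness gives that $\gamma/H$ is compact, and $\sigma/H\subseteq\gamma/H$; a Hausdorff topology coarser than a compact one must equal it (the identity map from compact to Hausdorff is a homeomorphism), so $\sigma/H=\gamma/H$. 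Applying the three-space lemma to $\sigma\subseteq\gamma$ then yields $\sigma=\gamma$, proving $H$ is key. The main obstacles are the lifting step in the three-space lemma and, here, the verification that the subspace two-sided uniformity on $H$ is exactly its intrinsic one (so that Raikov completeness genuinely forces $\sigma$-closedness); both are standard but must be handled with care.
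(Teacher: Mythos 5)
Your proposal is correct, and it follows essentially the same route as the source: this Fact is quoted in the paper from \cite{MeSh-key} without proof, but your ``three-space lemma'' is precisely Merson's Lemma (Fact \ref{f:Merson}), and the sup-plus-Merson argument for (1), the pullback of $\gamma/H$ along $q$ for central (hence normal) $H$ in (2), and the completeness-gives-closedness plus compact-quotient reduction in (3) are exactly the standard proofs there --- indeed the same scheme this paper itself uses in Lemma \ref{l:BOTHco}. The only departure is that you re-derive Merson's Lemma from scratch by a net-lifting argument through the open quotient map rather than citing it; that argument is sound and makes your write-up self-contained.
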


	In the assertion (4) compactness of $G/H$ cannot be replaced by precompactness (in the right uniformity) as Theorem \ref{t:counterexample} and Lemma \ref{l:minimum} demonstrate. 

\begin{example} \label{ex:Effros} 
	Let $K$ be a compact homogeneous metrizable space and $G:=\Homeo(K)$ be its group of all homeomorphisms with respect to the compact-open topology. Then $G$ is a Polish group. For every point $x_0 \in K$ consider the stabilizer subgroup $H:=St(x_0)$. Then $H$ is a \textit{key subgroup} of $G$. Indeed, the coset space $G/H$ is compact metrizable being naturally homeomorphic to $K$ (by Effros' Theorem).  
	As $H$ is also Raikov complete, we may apply Fact \ref{p:inj-key=co-min for central}.3. 
\end{example}

According to \cite{MeSh-key}, for many interesting groups $G$ the center $Z(G)$ is co-minimal (equivalently, inj-key). Hence the restriction map 
$r_H \colon \mathcal{T}_{\downarrow}(G) \to \mathcal{T}_{\downarrow}(H)$ for the center $H:=Z(G)$ is bijective (isomorphism of semilattices). In particular, this happens for the generalized Heisenberg group $G=(\R \oplus V^*) \rtimes_{w} V$ modeled on the bilinear duality map $w \colon V \times V^* \to \R$ for every Banach space $V$ (with $Z(G) \simeq \R$).  
Additional examples are:  
$G:=\UT (n,\F)$ the matrix group of all unitriangular matrices where $\F$ be a local field 
(with $Z(G) \simeq \F$, the ``corner one-parameter subgroup"), as well as $G:=\GL(n,\F)$ the general linear group. 

In \cite{MeSh-key}, there are many other important examples of co-minimal \textbf{central} subgroups. However, central subgroups cannot answer (by Fact \ref{p:inj-key=co-min for central}.2) the following problem which is one of the motivations of the present  work. 

\begin{prob} \cite{MeSh-key}    
	Find examples of inj-key subgroups which are not  co-minimal. 
\end{prob}
 
It makes sense to examine subgroups of massive non-commutative non-minimal groups.  
Below, in Theorem \ref{t:counterexample}, we give a  concrete natural example which resolves Problem \ref{p:main}. 
It is expected that many other counterexamples can be constructed. However, the present geometric example has its own interest, suggesting some new questions and perspectives for the groups with metrizable $M(G)$. See Questions \ref{q:Pest} and \ref{q:BPestov}. 

\sk 
\subsection*{Group actions and equivariant compactifications} 
\label{s:DynDef} 

Let $$G \times X \to X, (g,x) \mapsto gx=g(x)$$ be a continuous left action of a topological group $G$ on $X$. Then we say that $X$ is a $G$-\textit{space}. If, in addition, $X$ is compact Hausdorff, then $X$ is a $G$-\textit{flow}. 

A continuous function $f \colon X \to Y$ between $G$-spaces is a $G$-\textit{map} (or,\textit{ equivariant}) if $f(gx)=gf(x)$ for every $g \in G, x \in X$. 
If $Y$ is a $G$-flow and $f$ is dense, then $f$ is a $G$-\textit{compactification} of $X$. 
 If $f$ is a topological embedding, then
the $G$-compactification is said to be \emph{proper}.
For every $G$-space $X$, there exists a greatest $G$-compactification $\beta_G \colon X \to \beta_G (X)$, which is not necessarily proper in general 
(see \cite{Me-opit07} and \cite{Pest-Smirnov}). 
For every $G$-compactification 
$f \colon X \to Y$ there  exists a uniquely defined continuous onto $G$-map ($G$-factor) $q \colon \beta_G(X) \to Y$ such that $f=q \circ \beta_G$. 
This serves as an equivariant generalization of the classical Stone-\v{C}ech compactification. 


\begin{fact} \label{t:G/H}  
		\emph{(J. de Vries \cite{Vr-can75})} For every closed subgroup $H$ of $G$ the coset $G$-space $G/H$ is always $G$-compactifiable.  
	In fact, the greatest proper $G$-compactification $\beta_G (G/H)$ is just the Samuel compactification of the right uniform space $(G/H,\mathcal{U}_r(G/H))$. 
	\end{fact}
	
	 Thus, if $\mathcal{U}_r(G/H)$ is precompact (totally bounded), then $\beta_G (G/H)$ is the $G$-completion of $G/H$.   
For more information about $G$-compactifications see  
\cite{Vr-can75,Vr-Embed77} and 
\cite{Me-MaxEqComp,IbMe,Me-b,Me-opit07}.


\sk 
In the sequel, we use several times the following well known minimality property for the compact-open topology. 
\begin{fact} \label{f:MinProp} 
For every compact space $K$ its homeomorphism group $G:=\Homeo(K)$ is a topological group in the compact-open topology $\t_{co}$ and the tautological action $\Homeo(K) \times K \to K$ is continuous. 
Moreover, $\t_{co} \subseteq \g$ for every group topology $\g$ on 
$\Homeo(K)$ which makes that action continuous. 
\end{fact}

\sk 
Let $(G,\t_0)$ be a topological group and $H$ is its closed subgroup such that the action $G \times G/H \to G/H$ is effective. 
Consider the greatest $G$-compactification $\beta_G (G/H)$ which always is proper (result of de Vries Fact \ref{t:G/H}). 
Consider all possible $G$-compactifications $\nu \colon G/H \to Y$ 
(up to the natural equivalence of compactifications) such that $\nu$ is injective (but not necessarily, embedding) on $G/H$.  
We have the uniquely defined $G$-factor map $\beta_G (G/H) \to Y$ (which is injective on $G/H$) and the 
induced continuous action $\pi_f \colon (G,\t_0) \times Y \to Y$. Denote by $\g_f$ the corresponding compact-open topology on $G$ (inherited from $\Homeo(Y)$). Finally, denote by $\Sigma_H(G)$ the set of all such topologies $\g_f$ on $G$. Each $\g_f$ is a group topology and Hausdorff because $\pi_f$ is effective. By the minimality property of the compact-open topology (Fact \ref{f:MinProp}) we have $\g_f \subseteq \t_0$. Thus,  $\Sigma_H(G) \subseteq \mathcal{T}_{\downarrow}(G)$.  

We cannot guarantee equality in general; however, we can sometimes control all possible corresponding coset $G$-space topologies on $G/H$.

\begin{lemma} \label{l:CosetTopologies} 
	Let $(G,\t_0)$ be a topological group and $H$ is a strongly closed subgroup such that the action $G \times G/H \to G/H$ is effective.   
	Then  
	$$
	\{\g/H: \  \g \in \Sigma_H(G)\} = 
	\{\s/H: \  \s \in \mathcal{T}_{\downarrow}(G,\t_0)\}. 
	$$ 	
\end{lemma}
\begin{proof} 
The inclusion $\subseteq$ is clear because $\Sigma_H(G) \subseteq \mathcal{T}_{\downarrow}(G)$, as mentioned above.	 

To establish the reverse inclusion $\supseteq$, consider any $\s \in \mathcal{T}_{\downarrow}(G,\t_0)$. 
	We are going to check that there exists $\g \in \Sigma_H(G)$ such that $\s/H =\g/H$. 
	
	The quotient topology \(\sigma/H \subseteq \tau_0/H\) is Hausdorff because
	\(H\) is \(\sigma\)-closed, being strongly closed in \((G,\tau_0)\).
	Since \(\sigma/H\subseteq \tau_0/H\), the identity map
	$
	(G/H,\tau_0/H)\longrightarrow (G/H,\sigma/H)
	$
	is continuous and \(G\)-equivariant. Hence the greatest
	\((G,\sigma)\)-compactification \(\beta_G(G/H,\sigma/H)\) is also a
	\((G,\tau_0)\)-compactification of \(G/H\), injective on \(G/H\). Therefore
	the corresponding compact-open topology \(\gamma_f\), coming from the action
	of \(G\) on
	$
	Y:=\beta_G(G/H,\sigma/H),
	$
	belongs to \(\Sigma_H(G)\).
	
	Since the action \((G,\sigma)\times Y\to Y\) is continuous, Fact~\ref{f:MinProp} implies
	\(\gamma_f\subseteq \sigma\). Hence \(\gamma_f/H\subseteq \sigma/H\).
	Conversely, the orbit map
	\[
	(G,\gamma_f)\to (G/H,\sigma/H),\qquad g\mapsto gH,
	\]
	is continuous. Since \(\gamma_f/H\) is the strongest quotient topology on
	\(G/H\) making this map continuous, we get
	\(\sigma/H\subseteq \gamma_f/H\). Thus
	$
	\gamma_f/H=\sigma/H.
	$   
		
\end{proof} 

 \sk 
\subsection*{Some definitions from topological dynamics} 
\label{s:DynDef2} 
A \(G\)-flow \(X\) is said to be minimal if it has no proper nonempty closed
\(G\)-invariant subspace. Equivalently, every \(G\)-orbit is dense in \(X\).
Every \(G\)-flow contains a minimal \(G\)-subflow. Note that for every
topological group \(G\) there exists the universal minimal \(G\)-flow \(M(G)\).
The latter means that every minimal \(G\)-flow is a \(G\)-factor of \(M(G)\).
For more properties of \(M(G)\), we refer to \cite{Gl-book}.  

One of the important questions in topological dynamics is to evaluate how large is $M(G)$. 
While it is always nonmetrizable for non-compact locally compact groups, $M(G)$ can be metrizable for many dynamically massive groups. 

When $M(G)$ is trivial then $G$ is said to be \textit{extremely amenable}. 
This is equivalent to stating that every $G$-flow contains a $G$-fixed point. 
In some cases, $M(G)$ is dynamically small, constituting a tame $G$-flow or even a circularly ordered $G$-flow (e.g., for $G:=\Aut (\Q_0)$), \cite{GM-UltraHom}. For properties and the significance of tame and circularly ordered systems, see \cite{GM-TC,GM-UltraHom}.  

\sk 
\subsection*{Circular orders and their topology}
\label{s:circ} 
 

\begin{definition} \label{newC} \cite{Cech} 
	Let $X$ be a set. A ternary relation $R \subset X^3$ on $X$ is said to be a {\it circular} (or, sometimes, \emph{cyclic}) order  
	if the following four conditions are satisfied. It is convenient sometimes to write shortly $[a,b,c]$ instead of $(a,b,c) \in R$. 
	\ben
	\item Cyclicity: 
	$[a,b,c] \Rightarrow [b,c,a]$;  
	
	\item Asymmetry: 
	$[a,b,c] \Rightarrow (b, a, c) \notin R$; 
	
	\item Transitivity:    
	$
	\begin{cases}
		[a,b,c] \\
		[a,c,d]
	\end{cases}
	$ 
	$\Rightarrow [a,b,d]$;
	
	\item Totality: 
	if $a, b, c \in X$ are distinct, then \ $[a, b, c]$ 
	or $[a, c, b]$. 
	\een
\end{definition}

Observe that under this definition $[a,b,c]$ implies that $a,b,c$ are distinct. 

	On the set $C_n:=\{0, 1, \cdots, n-1\}$ consider the standard counter-clockwise cyclic order \textit{modulo $n$}.   
%
%
%
Let $X$ be a cyclically ordered set. 
We say that a vector $(a_1, \cdots, a_n) \in X^n$ 
(with $n \geq 3$) is a  \textit{cycle} in $X$ if 
$[a_i,a_j,a_k]$ for every c-ordered triple $[i,j,k]$ modulo $n$ in $C_n$.  Notation: $\nu=[a_1, \cdots, a_n]$.  
%

For 
$a,b \in X,$ define the (oriented) \emph{intervals}: 
$$
(a,b)_R:=\{x \in X: [a,x,b]\}, \ [a,b]_R:=(a,b) \cup \{a,b\}, \ [a,b)_R:=(a,b) \cup \{a\}, \ (a,b]_R:=(a,b)_R\cup\{b\}. 
$$
Clearly, 
$[a,a]=\{a\}$.  
Mostly we drop the subscript, or write $(a,b)_o$, $[a,b)_o$ and $(a,b]_o$ when $R$ is clear from the context.   

\begin{remark} \label{r:cech} \ \cite[p. 35]{Cech}
	\ben 
	\item 
	Every linear order $\leq$ on $X$ defines a \emph{standard circular order} $R_{\leq}$ on $X$ as follows: 
	$[x,y,z]$ iff one of the following conditions is satisfied:
	$$x < y < z, \ y < z < x, \  z < x < y.$$	
	
	\item (cuts) Let \((X,R)\) be a c-ordered set and let \(z\in X\). Define a linear
	order \(<_{z}\) on \(X\) by declaring \(z<_{z}x\) for every \(x\neq z\), and,
	for distinct \(a,b\in X\setminus\{z\}\),
	\[
	a<_{z}b \quad \Longleftrightarrow \quad [z,a,b].
	\]
	Then \(z\) is the least element of \((X,<_{z})\), and the circular order
	induced by \(<_{z}\) is the original circular order \(R\). Moreover, whenever
	\(z\notin (c,d)_R\), the linear interval \((c,d)_{<_{z}}\) coincides with the
	circular interval \((c,d)_R\).  
	\een
\end{remark}


\begin{proposition} \label{Hausdorff}  \cite{GM-UltraHom}   
Let $X$ be a set containing at least three elements. 
	For every c-order $R$ on $X$ the family of subsets
$${\mathcal B}:=\{(a,b)_R : \  a,b \in X, a \neq b\}$$
forms a base for a Hausdorff topology $\tau_R$ 
which we call the \emph{interval topology} of $R$.  

A topological space $(X,\tau)$ is said to be \emph{circularly ordered} if there exists a circular order $R$ such that $\tau_R=\t$.   
\end{proposition}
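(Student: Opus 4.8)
The plan is to verify the two base axioms for $\mathcal{B}$ and then to separate points. The decisive inputs are the transitivity and cyclicity axioms of Definition \ref{newC}, which yield a \emph{nesting} property of intervals, together with the cut construction of Remark \ref{r:cech}.2, which linearizes $R$ at any chosen basepoint; the hypothesis $|X|\ge 3$ enters in the covering and separation steps.

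First I would record the nesting property: if $[a,x,b]$, then $(a,x)_R\subseteq (a,b)_R$ and $(x,b)_R\subseteq (a,b)_R$. The first inclusion is exactly transitivity, since $[a,y,x]$ and $[a,x,b]$ give $[a,y,b]$; the second follows from transitivity after one cyclic rotation. For covering, given $x\in X$ I pick distinct $a,b\neq x$ (possible as $|X|\ge 3$); by totality either $[a,x,b]$, or $[a,b,x]$ in which case cyclicity gives $[b,x,a]$, so $x$ lies in $(a,b)_R$ or in $(b,a)_R$. Hence $\mathcal{B}$ covers $X$.

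The crux is the intersection axiom: given $x\in (a,b)_R\cap (c,d)_R$, produce $B\in\mathcal{B}$ with $x\in B\subseteq (a,b)_R\cap (c,d)_R$. Here I would cut at the endpoint $a$. By Remark \ref{r:cech}.2 the linear order $<_a$ restores $R$, the element $a$ is least, and $(a,b)_R=(a,b)_{<_a}$ is a genuine $<_a$-interval. I then analyze $(c,d)_R\cap (a,b)_R$ inside $<_a$. If $a\notin (c,d)_R$, then Remark \ref{r:cech}.2 also gives $(c,d)_R=(c,d)_{<_a}$, so the intersection is a single $<_a$-interval avoiding $a$, which is again a member of $\mathcal{B}$ and contains $x$. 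If instead $a\in(c,d)_R$, then $(c,d)_R$ ``wraps around'' the minimum and meets $(a,b)_{<_a}$ in a union of \emph{two} $<_a$-intervals (an initial piece and a terminal piece); the point $x$ lies in exactly one of them, and that piece is of the form $(a,d)_R$ or $(c,b)_R$, hence again in $\mathcal{B}$ by the nesting property. Either way $x$ has a basic neighborhood inside the intersection, so $\mathcal{B}$ is a base.

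For the Hausdorff property, given $x\neq y$ I pick $z\notin\{x,y\}$ (using $|X|\ge 3$) and, after relabeling via cyclicity, assume $[x,y,z]$. Two complementary open arcs $(p,q)_R$ and $(q,p)_R$ are always disjoint, so it suffices to choose $p,q$ separating $x$ and $y$: if $(x,y)_R\neq\emptyset$, take $w\in (x,y)_R$ and use $(z,w)_R\ni x$ together with $(w,z)_R\ni y$; if $(x,y)_R=\emptyset$, then $x,y$ are adjacent and the complementary arcs $(z,y)_R\ni x$ and $(x,z)_R\ni y$ already separate them. Disjointness in each case is immediate from asymmetry. I expect the genuine obstacle to be only the intersection axiom, precisely because two circular arcs can meet in a union of two arcs (the wrap-around case $a\in(c,d)_R$): one cannot globally linearize the configuration but must argue locally at $x$, and identifying the relevant piece as an interval of the form $(a,d)_R$ or $(c,b)_R$ is where transitivity and the cut construction do the real work.
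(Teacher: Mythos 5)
The paper itself gives no proof of Proposition \ref{Hausdorff}: it is imported verbatim from \cite{GM-UltraHom}, and the only in-text addition is the observation that closed arcs are closed because $X\setminus[a,b]_R=(b,a)_R$. So there is no in-paper argument to compare against; your proof has to stand on its own, and it does. Your overall architecture is the natural one and it is correct: the nesting lemma is a clean application of transitivity (both inclusions check out after the cyclic rotations you indicate); the covering step is fine; and the intersection axiom via cutting at the endpoint $a$, with the dichotomy $a\notin(c,d)_R$ (both arcs become genuine $<_a$-intervals, whose intersection is again an interval with endpoints among $\{a,c\}\times\{b,d\}$) versus $a\in(c,d)_R$ (the wrap-around case, where $(c,d)_R\setminus\{a\}=(c,a)_R\cup(a,d)_R$ and one argues locally at $x$), is exactly the right way to handle the fact that two arcs can meet in two arcs. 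Note that you are implicitly using the corrected reading of Remark \ref{r:cech}.2 (the condition should be that the \emph{cut point} $z$ lies outside $(c,d)_R$; the paper's ``$a\notin(c,d)_R$'' is a typo), which is the reading your argument needs.

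Two small points deserve polish, though neither is a genuine gap. First, in the wrap-around case the two pieces of $(a,b)_R\cap(c,d)_R$ are $(a,m)_R$ with $m=\min_{<_a}(b,d)$ and $(c,b)_R$ (the latter only when $c<_a b$), not literally ``$(a,d)_R$ or $(c,b)_R$''; the fix is immediate from your own Case-1 computation. Second, in the Hausdorff argument for adjacent points ($(x,y)_R=\emptyset$), the disjointness of $(z,y)_R$ and $(x,z)_R$ is \emph{not} immediate from asymmetry alone, since these are not complementary arcs: you need the one-line transitivity argument that a common point $u$ would satisfy $[x,u,y]$, contradicting $(x,y)_R=\emptyset$ (equivalently, observe $(x,z)_R=\{y\}\cup(y,z)_R$ and then invoke asymmetry). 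With these two sentences added, the proof is complete.
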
 

Note that every $[a,b]_R$ is closed in the interval topology. Indeed, $X \setminus [a,b]_R =(b,a)_R$ for every distinct $a,b$. 

For more information about the topology of circular orders and dynamical applications for group actions, we refer to  \cite{GM-CircOrd18, GM-TC,GM-UltraHom,Me-OrdSem,Me-MaxEqComp,Me-b,Me-CircTop}. 

\sk 
\section{Inj-key subgroups which are not co-minimal} 
\label{s:example} 

In this section, we examine  a (non-minimal) Polish group $G:=\Aut (\Q_0)$ with metrizable $M(G)$ which was analyzed in 
\cite{GM-UltraHom} motivated by other reasons (like: generalized amenability of topological groups and circular orderability of $M(G)$).  
  
Let $\Q_0:=\Q/\Z$ be the rational circle with the usual counter-clockwise cyclic order on $\Q_0$. 
Consider the group $G:=\Aut (\Q_0)$
of all circular order preserving permutations of $\Q_0.$  
Note that the action $G \times \Q_0 \to \Q_0$ is \textit{circularly ultrahomogeneous}  (\textit{c-ultrahomogeneous}, in short), meaning that for every circular order preserving bijection between finite subsets $f \colon A \to B$ there exists $g \in G$ which extends $f$. 
Indeed, after cutting the circle at a point outside the finite sets under
consideration, this reduces to the usual ultrahomogeneity of the ordered set
\((\mathbb Q,<)\).

Now, take on $\Q_0$ the \textit{discrete topology} and consider the corresponding pointwise topology $\t_0$ 
on the group $G:=\Aut (\Q_0)$ inherited from the topological product $(\Q_0, discrete)^{\Q_0}$. 
Then
\(G := (\Aut(\mathbb Q_0),\tau_0)\) is a Polish \textit{non-archimedean} topological group,
that is, it has a local base at the identity consisting of open subgroups.

	Let $a, a_1, \cdots, a_n \in \Q_0$. 
We use the following notation for the stabilizer subgroups: 
$$G_a:=\{g \in G: \  g(a)=a\}, \ \  
G_{a_1, \cdots, a_n}:=
\cap_{i=1}^n G_{a_i}.$$  
Choose $a_0 \in \Q_0$. Then the stabilizer 
$$H:=G_{a_0}=\{g \in \Aut (\Q_0): \ ga_0=a_0 \}$$ is a clopen  subgroup of $G$. It is topologically isomorphic to $\Aut(\Q,\leq)$.  
This Polish group $\Aut(\Q,\leq)$ is extremely amenable
by an important well-known result of Pestov \cite{Pest98}.    
The coset $G$-space $(G/H, \t_0/H)$ is topologically the discrete copy of $\Q_0$ with the tautological action of $G=\Aut (\Q_0)$. 

\begin{theorem} \label{t:counterexample} The subgroup 
	$H:=G_{a_0} \simeq \Aut(\Q,\leq)$ is an inj-key subgroup but not a co-minimal subgroup in $G:=(\Aut (\Q_0), \t_0)$. 
\end{theorem}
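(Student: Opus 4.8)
The plan is to prove the two assertions separately, handling non-co-minimality by exhibiting a single coarsening and handling inj-key by a rigidity argument.

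\emph{Not co-minimal.} Since $\t_0/H$ is the discrete topology on $G/H \cong \Q_0$, it suffices to produce one $\s \in \mathcal{T}_{\downarrow}(G)$ whose quotient $\s/H$ is not discrete. I would use the elementary fact that $\Q_0$ is dense in $\T$ and that every circular order preserving bijection of $\Q_0$ is strictly monotone with dense image, hence extends uniquely by monotonicity to an element of $\mathrm{Homeo}_+(\T)$; this yields an injective homomorphism $\iota \colon G \to \mathrm{Homeo}_+(\T)$ (one may equally use the $G$-action on the split completion $M(G)$ from \cite{GM-UltraHom}). Let $\s$ be the initial group topology on $G$ pulled back by $\iota$ from the uniform (equivalently compact-open) topology. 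Then $\s$ is Hausdorff because $\iota$ is injective, and $\s \subseteq \t_0$: whenever $g \in G$ fixes a finite $\eps$-dense subset of $\Q_0$ it maps each complementary arc onto itself and hence, together with its extension, displaces every point of $\T$ by less than $\eps$, so $\t_0$-convergence to $e$ forces uniform convergence of the extensions. Finally, the singleton $\{H\}$ is $\s/H$-open iff $H$ is $\s$-open; but the rational rotations $r_\alpha\colon x \mapsto x+\alpha$ lie in $G$, satisfy $d_{\mathrm{unif}}(\iota(r_\alpha),\mathrm{id}) = |\alpha| \to 0$, and move $a_0$, so $H$ is not $\s$-open. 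By homogeneity $\s/H$ is therefore non-discrete, whence $\s/H \neq \t_0/H$ and $H$ is not co-minimal.

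\emph{Inj-key.} Suppose $\s_1,\s_2 \in \mathcal{T}_{\downarrow}(G)$ with $\s_1|_H = \s_2|_H$; I must show $\s_1 = \s_2$. First, for $g \in G$ the inner automorphism $x \mapsto gxg^{-1}$ is a homeomorphism of $(G,\s_i)$ carrying $H = G_{a_0}$ onto $G_{g(a_0)}$, so from $\s_1|_H = \s_2|_H$ together with c-ultrahomogeneity (which makes all stabilizers $G_a$ conjugate to $H$) one gets $\s_1|_{G_a} = \s_2|_{G_a}$ for every $a \in \Q_0$, and hence agreement on every finite stabilizer $G_{a_1,\dots,a_n}$ as well. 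Replacing $\s_2$ by $\sup\{\s_1,\s_2\} \in \mathcal{T}_{\downarrow}(G)$, whose restriction to $H$ is still $\s_1|_H$, reduces the problem to the case $\s_1 \subseteq \s_2$, where it remains to prove that every $\s_2$-neighborhood of $e$ is a $\s_1$-neighborhood.

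The heart of the matter is to show that a coarser group topology on $G$ is pinned down by its restriction to the $\t_0$-open stabilizer $H$. Here I would exploit the coset decomposition $G = \bigsqcup_{\alpha} r_\alpha H$ given by the rotation transversal $\{r_\alpha : \alpha \in \Q/\Z\}$ (each coset being $\t_0$-clopen), together with the two facts above: the subspace topologies $\s_i|_{G_a}$ agree for all $a$, and conjugation by $H \cong \Aut(\Q,\leq)$ acts transitively on $\Q_0 \setminus \{a_0\}$. One natural approach is to analyze $\s$-convergence $r_{\alpha_\lambda} h_\lambda \to e$ by separating the rotational part $r_{\alpha_\lambda}$ from the $H$-part $h_\lambda$: conjugating by elements of $H$ and forming commutators $[h, r_{\alpha_\lambda}]$ transports the rotational data into configurations governed by the known topology $\s|_H$, which should force the manner in which the cosets accumulate at $e$ to be a function of $\s|_H$ alone. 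Carried out simultaneously for $\s_1$ and $\s_2$, this would yield $\s_2 \subseteq \s_1$ and hence equality.

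The main obstacle is precisely this last step. Agreement of the two topologies on $H$ (equivalently on all stabilizers $G_a$) gives no direct information about the rotation subgroup $R \cong \Q/\Z$, since $R \cap H = \{e\}$; the coupling between $R$ and $H$ has to be extracted from the group axioms and the c-ultrahomogeneous combinatorics of $\Q_0$. I expect a rigorous proof to lean on the fine structure of $M(G)$ as the metrizable split completion of the coset space $G/H$ established in \cite{GM-UltraHom}, using it to show that the only way of coarsening $\t_0$ while keeping $\s|_H$ fixed is the trivial one.
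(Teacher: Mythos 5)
Your first half is fine: the non-co-minimality argument via the extension homomorphism $\iota \colon G \to \Homeo_+(\T)$, the verification that the pulled-back compact-open topology $\t_{\T}$ is coarser than $\t_0$, and the rotations $r_\alpha$ witnessing that $H$ is not $\t_{\T}$-open is essentially the paper's own proof (Lemmas \ref{l:emb} and \ref{l:HisnotCo-min}). Your opening reductions for the inj-key half are also valid and appear in the paper: conjugation transports $\s_1|_H=\s_2|_H$ to all stabilizers $G_a$, and passing to $\sup\{\s_1,\s_2\}$ reduces to comparable pairs (this is exactly the mechanism of Lemma \ref{l:BOTHco}).

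However, the core of the inj-key proof is missing, and you say so yourself: the claim that commutators $[h,r_{\alpha_\lambda}]$ ``should force'' the coset accumulation to be determined by $\s|_H$ is a heuristic, not an argument, and nothing in your setup gives you any grip on what an arbitrary $\s \in \mathcal{T}_{\downarrow}(G,\t_0)$ can look like. The paper closes precisely this gap with two structural inputs you never establish. First, by the Chang--Gartside theorem (Fact \ref{f:Thm8}), $\t_{\T}$ is the \emph{minimum} Hausdorff group topology on $G$ (Lemma \ref{l:minimum}), so every coarser $\s$ is squeezed between $\t_{\T}$ and $\t_0$; this sandwich is what makes all the later interval estimates possible (Equations \ref{eq:great}, \ref{eq:great2}). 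Second, by classifying the $G$-compactifications of the discrete $G$-space $\Q_0$ through the orbit structure and maximality of stabilizers in $\beta_G(\Q_0)$ (Lemmas \ref{l:orbit}, \ref{l:maximality}, \ref{l:list}), the paper shows that only two quotient topologies $\s/H$ can occur, namely $\t_0/H$ and $\t_{\T}/H$ (Lemma \ref{l:equalities}). Inj-key then follows from a three-case analysis: both quotients discrete (Corollary \ref{c:happy}), both equal to $\t_{\T}/H$ (Merson's Lemma via the supremum trick, Lemma \ref{l:BOTHco}), and the mixed case, which is excluded by the genuinely hard combinatorial argument of Lemmas \ref{l:u1} and \ref{l:NAwithCO} ($M_k \subseteq U^4$ and the resulting contradiction). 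That mixed-case exclusion is exactly the ``coupling between $R$ and $H$'' you are looking for, and it cannot be waved through. A further soft point: your stated target --- that the only coarsening of $\t_0$ agreeing with $\t_0$ on $H$ is trivial --- is merely the \emph{key} property; after your supremum reduction you need the comparable-pair statement for an arbitrary finer topology $\s=\sup\{\s_1,\s_2\}\subseteq\t_0$, not just for $\s=\t_0$, so even a complete proof of that target would not finish the job.
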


First, observe that $H$ is not co-minimal, as demonstrated by Lemma \ref{l:HisnotCo-min} below.  
In order to prove that $H=G_{a_0}$ is inj-key in $G$, we need first to obtain several intermediate helpful results and recall related  notions. A part of them are taken from \cite{GM-UltraHom}. 	


	\begin{lemma} \label{l:stabilizers} \ 
		\begin{enumerate} 
			\item $gG_ag^{-1}=G_{g(a)}$ for every $g \in G$.  
			 \item $Core_G(H):=\cap\{gHg^{-1}: \ g \in G\}=\{e\}$ and hence the Cayley homomorphism $G \to S_{G/H}$ into the symmetric group $S_{G/H}$ is injective. 
			\item The family of open subgroups $G_{a_1, \cdots, a_n}$ is a local base of the (non-archimedean) group topology $\t_0$ on $G$.  
			\item 	Let $\s \in \mathcal{T}_{\downarrow}(G, \t_0)$. Assume that there exists a point $a \in \Q_0$ such that $G_a \in \s$. Then $\s=\t_0$. 
  \item $G_a$ is a maximal subgroup of $G$ for every $a \in \Q_0$.  
  \item For every $z \in \Q_0$ the induced action of $G_{z}$ on the linearly ordered set $(\Q_0 \setminus \{z\},<_z)$ is linear order preserving and ultrahomogeneous.
  \item 
   Let \(c,d\in \mathbb Q_0\) be distinct elements and choose
   \(z\in \mathbb Q_0\setminus (c,d)_R\). Define
   \[
   M_{[c,d]}:=\{g\in G: gx=x \ \forall x\notin (c,d)_R\}.
   \]
   Then \(M_{[c,d]}\) is a subgroup of \(G\). Its restriction to
   \((c,d)_R=(c,d)_{<_{z}}\) is naturally isomorphic to
   \(\Aut((c,d)_{<_{z}},<_{z})\), and therefore \(M_{[c,d]}\) acts linearly
   ultrahomogeneously on this interval. 
		\end{enumerate} 
	\end{lemma}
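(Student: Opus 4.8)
The plan is to treat the seven items in increasing order of difficulty, reducing the substantive claims (4)--(7) to the combinatorics of the circular order together with one soft topological observation. Items (1)--(3) are formal. For (1), if $h(a)=a$ then $(ghg^{-1})(g(a))=g(a)$, giving $gG_ag^{-1}\subseteq G_{g(a)}$, and the reverse inclusion follows by replacing $g$ with $g^{-1}$. For (2), I would first record that $G$ acts transitively on $\Q_0$: the extension of a bijection between the one-point sets $\{a\},\{b\}$ is vacuously order preserving, so c-ultrahomogeneity supplies $g\in G$ with $g(a)=b$. Hence, by (1), the conjugates of $H=G_{a_0}$ are exactly the stabilizers $G_b$ $(b\in\Q_0)$, so $\bigcap_b G_b$ is the kernel of the action on $\Q_0$, which is trivial; triviality of the core is equivalent to injectivity of the Cayley map $G\to S_{G/H}$. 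Item (3) is just the definition of the pointwise topology inherited from $(\Q_0,\text{discrete})^{\Q_0}$: a basic neighborhood of $e$ prescribes the values on finitely many points $a_1,\dots,a_n$, and since $\Q_0$ is discrete this set equals $G_{a_1,\dots,a_n}$, an open subgroup, whence the non-archimedean character.

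Item (4) is the one genuinely topological step, and it is what makes $H$ inj-key. Suppose $\s\in\mathcal{T}_{\downarrow}(G,\t_0)$ satisfies $G_a\in\s$, so $G_a\in N_e(G,\s)$. Since $(G,\s)$ is a topological group, each inner automorphism $x\mapsto gxg^{-1}$ is a $\s$-homeomorphism fixing $e$; therefore $gG_ag^{-1}\in N_e(G,\s)$, and by (1) this set is $G_{g(a)}$. Transitivity (from (2)) then gives $G_b\in N_e(G,\s)$ for every $b\in\Q_0$. Finite intersections of $\s$-neighborhoods are $\s$-neighborhoods, so every $G_{a_1,\dots,a_n}$ lies in $N_e(G,\s)$; by (3) these form a $\t_0$-base at $e$, whence $\t_0\subseteq\s$, and together with $\s\subseteq\t_0$ this yields $\s=\t_0$.

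Items (5)--(7) exploit the identity $R=R_{<_z}$ from Remark \ref{r:cech}(2). For (6), which I read as a statement about the stabilizer $G_z$ of the cut point (for $z=a_0$ this is $H$, and all point stabilizers are conjugate, so there is no loss), any $g\in G_z$ fixes $z$ and preserves $R$, hence preserves $<_z$ on $\Q_0\setminus\{z\}$, so the action is order preserving; conversely a finite $<_z$-order-preserving partial map, extended by $z\mapsto z$, preserves $R$ (because $R=R_{<_z}$ and triples through $z$ are governed by $[z,u,v]\Leftrightarrow u<_z v$), so c-ultrahomogeneity of $G$ furnishes an extension in $G_z$, giving order-ultrahomogeneity. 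Item (5) is then immediate: if $G_a\subsetneq K\le G$, choose $g\in K$ with $g(a)\ne a$; since $G_a\subseteq K$ already acts transitively on $\Q_0\setminus\{a\}$ by (6), the $K$-orbit of $a$ is all of $\Q_0$, and a transitive subgroup containing $G_a$ must equal $G$. For (7), $M_{[c,d]}$ is a subgroup as a pointwise stabilizer; since $z\notin(c,d)$, the circular interval $(c,d)_R$ coincides with the linear interval $(c,d)_{<_z}$ (Remark \ref{r:cech}(2)), and each element of $M_{[c,d]}$ permutes this interval while fixing its complement. Ultrahomogeneity comes from a cut-and-glue: a finite $<_z$-order-preserving map inside $(c,d)$ extends to an order automorphism of the countable dense linear order without endpoints $((c,d)_{<_z},<_z)\cong(\Q,\le)$, and then to all of $\Q_0$ by the identity outside $(c,d)$.

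The main obstacle is the verification in (6)--(7) that the piecewise maps so produced are genuine circular automorphisms, i.e.\ preserve the ternary relation on triples straddling the arc. The clean device is to argue entirely inside the cut: a bijection of $\Q_0$ is an order automorphism of $(\Q_0,<_z)$ if and only if it preserves $R_{<_z}=R$, so a map that fixes $z$, fixes every point $\le_z$-below $c$ and $\ge_z$-above $d$, and acts as an order automorphism on $(c,d)_{<_z}$ is manifestly a $<_z$-automorphism and hence lies in $G$. This reduces each circular check to an evident linear one and completes the extension required for both homogeneity statements.
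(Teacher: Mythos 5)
Your proof is correct. For items (1)--(4) and (7) it matches the paper's: (1)--(3) are the same formal observations the paper labels straightforward, (4) is exactly the paper's hint (conjugation of stabilizers via (1), transitivity of $G$ on $\Q_0$, finite intersections, then comparison of neighborhood filters at $e$), and (7) is the paper's observation that $M_{[c,d]}$ is a copy of $\Aut(\Q,\le)$ acting on $(c,d)_{<_z}\cong(\Q,\le)$, which you usefully back up with the explicit cut-and-glue verification that the glued map preserves $<_z$ and hence $R=R_{<_z}$. Where you genuinely diverge is in the pair (5)--(6). The paper proves (6) by citing a claim from \cite[Theorem 4.3]{GM-UltraHom}, and proves maximality (5) by a direct double-coset case analysis: for $G_a\le P\le G$, $g\in P\setminus G_a$ and $f\notin G_a$, c-ultrahomogeneity yields $\varphi\in G_a$ with $\varphi(f(a))=g(a)$, hence $f\in\varphi^{-1}gG_a\subseteq G_aPG_a=P$. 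You instead prove (6) self-containedly (extend a finite $<_z$-preserving partial map by $z\mapsto z$, use $R_{<_z}=R$ and c-ultrahomogeneity to land in $G_z$) and then deduce (5) from (6) by an orbit argument: a proper overgroup $K$ of $G_a$ has full orbit $Ka=\Q_0$, and a transitive subgroup containing a point stabilizer must be all of $G$. Both routes rest on the same underlying fact, the transitivity of $G_a$ on $\Q_0\setminus\{a\}$, so the mathematical content is close; but your arrangement is more self-contained (no external citation) and makes the logical dependence of maximality on homogeneity explicit, whereas the paper's is shorter at the price of outsourcing (6). Finally, your reading of (6) as a statement about the stabilizer $G_z$ of the cut point (rather than literally $G_{a_0}$ for arbitrary $z$, which would not even leave $\Q_0\setminus\{z\}$ invariant) is the correct interpretation, and it is how the lemma is actually used later in the paper.
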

	\begin{proof} (1), (2), (3) are straightforward. 
	
	(4) By (1) and the transitivity of the \(G\)-action on \(\mathbb Q_0\), every
	point stabilizer \(G_b\), \(b\in\mathbb Q_0\), is \(\sigma\)-open. Hence every
	finite intersection \(G_{b_1,\ldots,b_n}\) is \(\sigma\)-open. Since these
	subgroups form a local base of \(\tau_0\) at the identity, we get
	\(\tau_0\subseteq\sigma\). As \(\sigma\subseteq\tau_0\), it follows that
	\(\sigma=\tau_0\).

(5) Let \(G_a\leq P\leq G\) and suppose that \(G_a\neq P\). Choose
\(g\in P\setminus G_a\), so \(g(a)\neq a\). We show that \(P=G\). Let
\(f\in G\). If \(f(a)=a\), then \(f\in G_a\subseteq P\). Otherwise, by
circular ultrahomogeneity there exists \(\varphi\in G_a\) such that
\(\varphi(f(a))=g(a)\). Hence \(g^{-1}\varphi f\in G_a\), and therefore
\[
f\in \varphi^{-1}gG_a\subseteq G_aPG_a=P.
\]
Thus \(P=G\), proving that \(G_a\) is maximal.

%
%
%
%
		
		(6) It is a partial case of a claim from \cite[Theorem 4.3]{GM-UltraHom}. 
		
		(7) Observe that the linearly ordered set $(c,d)_{<_z}$ is linearly isomorphic to $(\Q, \leq)$ and the group $M_{[c,d]}$ can be treated as $\Aut(\Q, \leq)$.  
		
	\end{proof}

	It is well known (see, for example, \cite[Example 4.2]{BT}) that there exists an injective continuous dense homomorphism $\Aut(\Q, \leq) \to \Homeo_+[0,1]$ of Polish groups. 
	A similar embedding remains true for the rational circle  $\Q_0$ (see 
	\cite[Section 5.1]{CG}).

	
\begin{lemma} \label{l:emb} 
	Let $\Homeo_+(\T)$ be the group of all circular order preserving autohomeomorphisms of the circle $\T$. 
	Consider on $\Homeo_+(\T)$ 
	its usual compact-open topology $\t_{co}$.  
Then the following hold. 
\begin{enumerate}
	\item There exists a continuous injective $\t_{co}$-dense  homomorphism  
	$$i_{\T} \colon G=(\Aut (\Q_0), \t_0) \to (\Homeo_+(\T), \t_{co}).$$
	\item 
	$i_{\T}(G_q)=St(q) \cap i_{\T}(G)$, where $St(q)$ is the stabilizer subgroup 
	$$St(q): =\{g \in \Homeo_+(\T) : \ g(q)=q\}$$ 
	of $\Homeo_+(\T)$ for every point $q \in \Q_0$. In particular, $i_{\T}(H)=St(a_0) \cap i_{\T}(G)$. 
\end{enumerate}
\end{lemma}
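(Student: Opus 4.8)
The plan is to construct the homomorphism $i_{\T}$ explicitly by extending each $g \in \Aut(\Q_0)$ to an autohomeomorphism of the full circle $\T=\R/\Z$. First I would recall the embedding $\Q_0 = \Q/\Z \hookrightarrow \R/\Z = \T$ and observe that $\Q_0$ is dense in $\T$. Given $g \in \Aut(\Q_0)$, I would use the cut construction of Remark \ref{r:cech}: fix a base point $z$, pass to the induced linear order $<_z$ on $\Q_0 \setminus \{z\}$, so that $g$ (suitably handled at $z$) becomes an order preserving bijection of a countable dense subset of an interval. Such a map extends uniquely to an order preserving (hence continuous) bijection of the Dedekind completion, which is the circle $\T$; its inverse extends the inverse of $g$, so the extension is an autohomeomorphism preserving the circular order. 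This defines $i_{\T}(g)$, and functoriality of the extension (uniqueness forces it to respect composition) gives that $i_{\T}$ is a group homomorphism.

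Next I would verify injectivity. Since each $i_{\T}(g)$ restricts to $g$ on the dense set $\Q_0$, if $i_{\T}(g)=\mathrm{id}$ then $g$ fixes every rational point, so $g=e$; thus $\ker i_{\T}=\{e\}$. For density of $i_{\T}(G)$ in $(\Homeo_+(\T),\t_{co})$, I would argue that the compact-open topology on $\Homeo_+(\T)$ has basic neighborhoods controlled by finitely many points up to small error, and use the c-ultrahomogeneity of the $G$-action on $\Q_0$ (stated just before Theorem \ref{t:counterexample}): any circle homeomorphism can be approximated, on a finite set of rational sample points, by an element of $G$ realizing the prescribed finite circular-order pattern. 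This is exactly the kind of back-and-forth density argument already cited for $\Aut(\Q,\leq) \to \Homeo_+[0,1]$ in \cite{BT}, and the cited reference \cite{CG} for the circular case; I would invoke those to keep the approximation routine rather than redoing the $\eps$-estimates.

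For continuity of $i_{\T}$, I would check it at the identity: a basic $\t_{co}$-neighborhood is determined by a compact set (all of $\T$) and an $\eps$, and since $\t_0$ has the local base of stabilizers $G_{a_1,\dots,a_n}$ from Lemma \ref{l:stabilizers}(3), it suffices to see that fixing finitely many well-spaced rational points forces the $\T$-extension to stay uniformly $\eps$-close to the identity. Because the extension is monotone (order preserving) between consecutive fixed points, fixing a fine enough finite net of rationals bounds the displacement by the mesh of the net, giving the required continuity.

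Part (2) is then essentially formal. The inclusion $i_{\T}(G_q) \subseteq St(q)\cap i_{\T}(G)$ is immediate since $g(q)=q$ forces $i_{\T}(g)(q)=q$. Conversely, if $i_{\T}(g) \in St(q)$ then the extension fixes $q \in \Q_0$, and since the extension restricts to $g$ on $\Q_0$ we get $g(q)=q$, i.e. $g \in G_q$; hence $i_{\T}(g) \in i_{\T}(G_q)$. Specializing to $q=a_0$ gives $i_{\T}(H)=St(a_0)\cap i_{\T}(G)$. I expect the main obstacle to be the density claim in part (1): making precise that the finite-pattern approximation supplied by c-ultrahomogeneity actually yields $\t_{co}$-closeness on all of $\T$ (not merely agreement at sample points), which is where the monotonicity of the extensions and a uniform-continuity estimate must be combined carefully — though leaning on \cite{CG} should contain this difficulty.
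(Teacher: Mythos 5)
Your proposal is correct and follows essentially the same route as the paper: the paper's (much terser) proof also defines $i_{\T}(g)=\bar{g}$ as the natural extension of $g$ to a circular order preserving homeomorphism of $\T$ and then asserts continuity, injectivity, density, and the stabilizer identity, citing \cite{BT} and \cite{CG} for the analogous embedding results just as you do. Your write-up simply supplies the details (Dedekind-completion construction of $\bar{g}$, continuity via monotonicity between fixed rational points, density via c-ultrahomogeneity) that the paper treats as routine.
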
 
	\begin{proof}
Notation $\Homeo_+(\T, \Q_0):=\{f \in \Homeo_+(\T):  f(\Q_0)=\Q_0\}.$  
For every \(g\in \Aut(\mathbb Q_0)\), since \(g\) preserves the circular order
on the dense subset \(\mathbb Q_0\subset \mathbb T\), there exists a unique
orientation-preserving homeomorphism \(\bar g\in \Homeo^+(\mathbb T)\)
extending \(g\). Moreover, \(\bar g(\mathbb Q_0)=\mathbb Q_0\). Define
\[
i_{\mathbb T} \colon \Aut(\mathbb Q_0)\to \Homeo^+(\mathbb T), \qquad
i_{\mathbb T}(g)=\bar g .
\]
Then \(i_{\mathbb T}\) is an injective homomorphism. Its continuity follows
from the standard fact that, for monotone circle homeomorphisms, pointwise
convergence on the dense set \(\mathbb Q_0\) implies compact-open convergence
on \(\mathbb T\). Its image is
\[
i_{\mathbb T}(G)=\Homeo^+(\mathbb T,\mathbb Q_0)
:=\{f\in \Homeo^+(\mathbb T): f(\mathbb Q_0)=\mathbb Q_0\},
\]
which is dense in \(\Homeo^+(\mathbb T)\). 

Finally, for every \(q\in\mathbb Q_0\),
$
i_{\mathbb T}(G_q)=St(q)\cap i_{\mathbb T}(G),
$
where \(St(q):=\{f\in \Homeo^+(\mathbb T): f(q)=q\}\). 

\end{proof}

	Denote by $\t_{\T}$ the (preimage) topology on $G$ induced by $i_{\T}$. Clearly, $\t_{\T} \in  \mathcal{T}_{\downarrow}(G,\t_0)$. 
	Sometimes we identify $G$ algebraically with the dense subgroup $i_{\T}(G)$ of $\Homeo_+(\T)$. 
	
	\begin{lemma} \label{l:HisnotCo-min} 
		$H:=G_{a_0}$ 
		is not co-minimal in $G$.  
	\end{lemma}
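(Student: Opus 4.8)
The plan is to witness the failure of co-minimality with a single coarser topology, namely $\t_{\T} \in \mathcal{T}_{\downarrow}(G,\t_0)$ induced by the embedding $i_{\T} \colon G \to \Homeo_+(\T)$ of Lemma \ref{l:emb}. By Definition \ref{d:co-min} it suffices to check that $\t_{\T}/H \neq \t_0/H$ on the coset space $G/H$. Since $(G/H,\t_0/H)$ is the discrete copy of $\Q_0$, the whole point is to show that $(G/H,\t_{\T}/H)$ is \emph{not} discrete; equivalently, that $H=G_{a_0}$ is not $\t_{\T}$-open (for a topological group the projection $q\colon G \to G/H$ is open, so $H$ is open exactly when $eH$ is isolated, and then by homogeneity $\t_{\T}/H$ is discrete). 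Concretely, I would show that $eH$ is not isolated by producing, inside every basic $\t_{\T}$-neighborhood of $e$, an element $g$ with $g(a_0) \neq a_0$, so that $q(U)$ contains the coset $gH \neq eH$.

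Since $\T$ is compact metric, the compact-open topology on $\Homeo_+(\T)$ is the topology of uniform convergence, so a neighborhood base of the identity is given by the uniform $\eps$-balls $V_{\eps}:=\{f : \sup_{x \in \T} d(f(x),x) < \eps\}$ for a fixed metric $d$ on $\T$, and the sets $U_{\eps}:=i_{\T}^{-1}(V_{\eps})$ form a neighborhood base of $e$ in $\t_{\T}$. The heart of the argument is a small-support construction: given $\eps>0$, choose by density of $\Q_0$ in $\T$ a pair $c,d \in \Q_0$ with $a_0 \in (c,d)_{\circ}$ and with the arc $(c,d)_{\circ}$ of length less than $\eps$. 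By Lemma \ref{l:stabilizers}(7) the subgroup $M_{[c,d]}$ acts linearly ultrahomogeneously, in particular transitively, on the infinite set $(c,d)_{\circ}\cap \Q_0$, so there is $g \in M_{[c,d]}$ with $g(a_0)=b$ for some $b \in (c,d)_{\circ}$, $b \neq a_0$. Every element of $M_{[c,d]}$ fixes all points outside $(c,d)$; hence its extension $\bar g = i_{\T}(g)$ is the identity off the arc $[c,d]$ and carries $[c,d]$ onto itself, so $\sup_{x} d(\bar g(x),x)$ is bounded by the length of $[c,d]$, which is below $\eps$. Thus $g \in U_{\eps}$ while $g \notin H$.

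This shows $U_{\eps} \not\subseteq H$ for every $\eps>0$, so $H$ is not $\t_{\T}$-open; equivalently $q(U_{\eps})$ always contains a coset different from $eH$, whence $eH$ is not isolated and $(G/H,\t_{\T}/H)$ is not discrete. Consequently $\t_{\T}/H \neq \t_0/H$ (in particular $\t_{\T}\neq\t_0$), and since $\t_{\T} \in \mathcal{T}_{\downarrow}(G,\t_0)$ this contradicts co-minimality, proving the lemma.

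I expect the only delicate point to be the uniform estimate on the extension $\bar g$: one must be sure that an automorphism of $\Q_0$ supported in the rational arc $(c,d)_{\circ}\cap \Q_0$ extends to a homeomorphism of $\T$ whose displacement is controlled by the \emph{geometric} length of $[c,d]$, rather than by combinatorial data of $\Q_0$. This follows because the canonical extension $i_{\T}(g)$ of Lemma \ref{l:emb} fixes the dense set of rational points outside $(c,d)$, hence fixes the whole closed complementary arc pointwise, forcing $[c,d]$ to be invariant under $\bar g$. One could optionally strengthen the conclusion by identifying $(G/H,\t_{\T}/H)$ with $\Q_0$ carrying the subspace topology inherited from $\T$ (a non-discrete, dense-in-itself copy of $\Q$) via the equivariant orbit map $gH \mapsto \bar g(a_0)$, the same small-support construction showing that this orbit map is open; but this refinement is not needed for the statement.
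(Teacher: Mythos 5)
Your proof is correct and takes essentially the same route as the paper: both witness the failure of co-minimality with the single coarser topology $\t_{\T}$ and conclude from the fact that $(G/H,\t_{\T}/H)$ is not discrete while $(G/H,\t_0/H)$ is. The only difference is that you explicitly verify, via the small-support construction with $M_{[c,d]}$, the step the paper dismisses as ``clear'' (that $i_{\T}(H)$ is not open in $i_{\T}(G)$ for the compact open topology), and that verification is sound.
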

	\begin{proof}   
		Clearly, the coset space $\Homeo_+(\T)/St(a_0)$ is the circle $\T$, where $St(a_0)=\{g \in \Homeo_+(\T):  g(a_0)=a_0\}$. 
The subgroup \(i_{\mathbb T}(H)=St(a_0)\cap i_{\mathbb T}(G)\) is not open in
\(i_{\mathbb T}(G)\). Indeed, every compact-open neighborhood of the identity
in \(i_{\mathbb T}(G)\) contains elements moving \(a_0\) slightly, for instance
to nearby rational points of \(\mathbb Q_0\).
	
	Thus, $i_{\T}(G) / i_{\T}(H)$ is not discrete. This means that for the topology $\t_{\T}$ on $G$ the corresponding coset topology $\t_{\T}/H$ is not discrete (in contrast, to $\t_0/H$). Therefore, 
		$H$ is not co-minimal in $G$.    
		
	\end{proof}
	

	Recall the following useful result due to Chang--Gartside \cite{CG}. 
	
	\begin{fact} \label{f:Thm8} \cite[Theorem 8]{CG} 
Let $X$ be a compact metrizable space. Denote by $\mathcal{O}_X$ the subset of all points $x \in X$ such that there exists 
a neighborhood $O_x$ of $x$ in $X$ which is homeomorphic to the real interval $(0,1)$.  
Let \(C_X:=X \setminus \mathcal O_X\). 
Assume that $\mathcal{O}_X$ is dense in $X$. 
Let $G$ be a subgroup of $\Homeo (X)$ such that 
for every open subset $U$ of $X$ homeomorphic to $(0, 1)$ there is a non-trivial $g \in G$ with 
$$Move(g):=\{x \in X: \ gx \neq x\} \subseteq U.$$ 
Then there exists a canonically defined topology $\t_{co}|C_X$  on $\Homeo (X)$ such that this topology 
restricted to $G$ is the \emph{minimum} Hausdorff group topology on 
the abstract group $G$ (i.e., it is the least Hausdorff group topology on the discrete group $G$). 		
\end{fact}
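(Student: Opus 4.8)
The plan is to establish the two halves of ``minimum Hausdorff group topology'' separately: (i) that the canonical topology $\t_{co}|C_X$ is itself a Hausdorff group topology on $G$, and (ii) that it is coarser than every Hausdorff group topology on the abstract group $G$, i.e.\ that for each Hausdorff group topology $\s$ on $G$ the identity map $(G,\s)\to (G,\t_{co}|C_X)$ is continuous. I read $\t_{co}|C_X$ as the topology induced by the restriction of the action to the (closed, $G$-invariant) singular set $C_X$ --- equivalently, the pullback under $G\to \Homeo(C_X)$ of the compact-open topology, which in the cases of interest (where $C_X$ is dense, as for the split rational circle) is just the topology of pointwise convergence on $C_X$. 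The two density hypotheses are what let one recover the whole action of $G$ on $X$ from its trace on $C_X$ together with the local arc structure on $O_X$.

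For (i) I would first observe that any $g\in G$ preserves the property ``having an arc neighbourhood'', so $O_X$ and $C_X$ are $G$-invariant and $G\to\Homeo(C_X)$ is a genuine homomorphism; pulling back a group topology gives a group topology. The only real content is Hausdorffness, i.e.\ faithfulness of the action $G\curvearrowright C_X$. When $C_X$ is dense this is immediate: a homeomorphism fixing the dense set $C_X$ pointwise is the identity. In general one argues that an element acting trivially on $C_X$ must, by density of $O_X$ and continuity, fix a dense set and hence be $e$.

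For (ii), which is the substance, I would reformulate: since $\t_{co}|C_X$ is generated by the orbit maps $\mathrm{ev}_c\colon g\mapsto g(c)$ $(c\in C_X)$, it suffices to show each $\mathrm{ev}_c\colon (G,\s)\to X$ is continuous, i.e.\ that $g_i\to e$ in $\s$ forces $g_i(c)\to c$ for every $c\in C_X$. Suppose not: along a subnet there are $c\in C_X$ and $\e>0$ with $\d(g_i(c),c)\ge \e$. Here the hypothesis is decisive --- it says exactly that $G$ is a \emph{locally moving} group of homeomorphisms, so in a small arc $U\ni c$ I can choose nontrivial elements of $G$ supported in $U$. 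The mechanism is then the standard disjoint-support/commutator trick: after shrinking $U$ and passing to a subnet so that the supports $g_i(\operatorname{supp} h)$ escape a fixed test region, one arranges a fixed relation --- for instance a nontrivial commutator $[h,h']\neq e$ of elements supported near $c$ whose $g_i$-conjugates are eventually supported off $\operatorname{supp} h'$, so that $[\,g_i h g_i^{-1},\,h'\,]=e$ for all $i$. Since $\s$ is a group topology, $g_i h g_i^{-1}\to h$ and hence $[\,g_i h g_i^{-1},\,h'\,]\to [h,h']\neq e$, contradicting that every term equals $e$ together with the Hausdorffness of $\s$. This forces $g_i(c)\to c$ and yields $\t_{co}|C_X\subseteq\s$.

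The main obstacle is precisely this last manufacturing step: converting a single ``escaping'' singular point into a \emph{fixed}, $\s$-detectable algebraic relation that fails only in the limit. The delicate points are (a) controlling the images $g_i(U)$ as sets --- not just the point $g_i(c)$ --- so that the conjugated supports really do clear a fixed region, which is where the abundance of small-support homeomorphisms (and likely a fragmentation lemma writing near-identity elements as products of locally supported ones) must be used uniformly along the net, and (b) producing, inside an arbitrarily small arc, a non-commuting pair of elements of $G$, i.e.\ upgrading the bare local-movement hypothesis to genuine local non-commutativity. Both are folded into the locally-moving structure (in the spirit of Rubin's reconstruction, which recovers $X$ from the abstract group $G$), and it is the careful support bookkeeping that I expect to be the crux.
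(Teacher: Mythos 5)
Note first that the paper itself contains \emph{no proof} of this statement: Fact \ref{f:Thm8} is imported verbatim from Chang--Gartside \cite[Theorem 8]{CG}, and the author explicitly declines even to reproduce the definition of the topology $\t_{co}|C_X$. So your attempt can only be judged against the statement and against \cite{CG} --- and on that score it has a fatal flaw at the outset. Your reading of $\t_{co}|C_X$ as the pullback of the compact-open topology along the restriction homomorphism $G \to \Homeo (C_X)$ cannot be the intended topology, for two concrete reasons. (a) The paper notes that when $C_X=\emptyset$ (i.e., $\mathcal{O}_X=X$, e.g.\ $X=\T$ --- which is exactly the case invoked later, in Lemma \ref{l:minimum}, where $C_X$ is empty rather than dense) the canonical topology is the usual compact-open topology on $\Homeo (X)$; your pullback would instead be the indiscrete topology. (b) Worse, the Fact's own hypothesis forces the restriction homomorphism to have nontrivial kernel: any open $U\subseteq X$ homeomorphic to $(0,1)$ is contained in $\mathcal{O}_X$, so the hypothesized nontrivial $g\in G$ with $Move(g)\subseteq U$ fixes $C_X$ pointwise. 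Hence under the hypotheses of the Fact your pullback topology is \emph{never} Hausdorff, and your step (i) --- including the claim that density lets one deduce faithfulness of the action on $C_X$ --- is not merely unproved but false; the implication runs exactly backwards, since the locally-moving hypothesis manufactures nontrivial elements acting trivially on $C_X$. The actual topology $\t_{co}|C_X$ of \cite{CG} is a compact-open-type topology built from the action of $\Homeo (X)$ on all of $X$, suitably relativized to $C_X$, not a topology pulled back from the action on $C_X$.

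Concerning your step (ii): the disjoint-support/commutator mechanism (conjugation is $\s$-continuous, relations $[g_ihg_i^{-1},h']=e$ pass to limits in a Hausdorff group topology, contradiction with $[h,h']\neq e$) is indeed the engine behind minimality theorems of this type, and is in the spirit of what \cite{CG} does. But even granting the correct definition of the target topology, your argument is incomplete in two ways. First, $\s$-continuity of the evaluations $\mathrm{ev}_c$, $c\in C_X$, cannot suffice: by the kernel observation above these maps do not even separate points of $G$, so they cannot generate a Hausdorff (let alone minimum) group topology; one needs $\s$-openness of genuine compact-open data on all of $X$, i.e.\ of sets of the form $\{g: g(K)\subseteq U\}$. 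Second, the step you yourself single out as the crux --- converting divergence of the single orbit point $g_i(c)$ into a fixed, eventually-satisfied commutation relation, which requires controlling the \emph{sets} $g_i(\operatorname{supp} h)$ and not just the point $g_i(c)$ --- is precisely what is left unproved; without it no contradiction is produced, and nothing in the sketch supplies it. (By contrast, the other gap you flag, local non-commutativity, is routine: if $h\neq e$ is supported in an arc $A$ and $h(p)\neq p$, choose a subarc $A'\ni p$ with $h(A')\cap A'=\emptyset$ and a nontrivial $h''$ supported in $A'$; then $hh''h^{-1}$ and $h''$ have disjoint supports, so $[h,h'']\neq e$.) In sum, the proposal misidentifies the object to be constructed and leaves its central estimate open, so it does not amount to a proof of the Fact.
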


	
For simplicity we do not include definitions from \cite{CG} describing the canonically defined topology $\tau_{co}|_{C_X}$. 
It is important to note that for empty $C_X$ (that is, for $\mathcal{O}_X=X$) this topology is the usual compact-open topology $\t_{co}$. This happens for example in the case of the circle $X:=\T$. See also a discussion about the ``rational circle" in  \cite[Section 5.1]{CG}.

\begin{lemma} \label{l:minimum}    
The compact-open topology $\t_{\T}$ is the \textbf{minimum} group topology on $G:=\Aut (\Q_0)$. 
		Hence, $\t_{\T}$ is the least element in $\mathcal{T}_{\downarrow}(G,\t_0)$. Also, $H$ is $\s$-closed in $G$ for every $\s \in \mathcal{T}_{\downarrow}(G,\t_0)$ (therefore, $H$ is a \emph{strongly closed} subgroup of $G$ in terms of \cite{DM10}). 	
\end{lemma}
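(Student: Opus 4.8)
The plan is to establish the three assertions in turn: the first via the Chang--Gartside criterion (Fact \ref{f:Thm8}), and the remaining two as formal consequences. Throughout I identify $G$ algebraically with the dense subgroup $i_{\T}(G)=\Homeo_+(\T,\Q_0)$ of $\Homeo_+(\T)\subseteq \Homeo(\T)$, so that the compact-open topology $\t_{co}$ on $\Homeo_+(\T)$ pulls back along $i_{\T}$ to $\t_{\T}$.

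First I would verify the hypotheses of Fact \ref{f:Thm8} for $X:=\T$ and the subgroup $i_{\T}(G)$. Since $\T$ is a compact metrizable manifold, every point has a neighborhood homeomorphic to $(0,1)$; thus $\mathcal{O}_{\T}=\T$ is dense and $C_{\T}=\emptyset$, whence the canonical topology $\t_{co}|_{C_{\T}}$ reduces to the ordinary compact-open topology $\t_{co}$. The one nontrivial hypothesis is the \emph{moving condition}: given an open arc $U\subseteq\T$ homeomorphic to $(0,1)$, I must exhibit a nontrivial $g\in G$ with $Move(\bar g)\subseteq U$. This is the step where the c-ultrahomogeneity enters, and it is the only real work. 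I would choose distinct rationals $c,d\in \Q_0\cap U$ with the closed arc $[c,d]_{\T}\subseteq U$ (possible by density of $\Q_0$), together with a basepoint $z\in \Q_0$ outside $U$, so that $z\notin (c,d)_{\circ}$. By Lemma \ref{l:stabilizers}(7) the subgroup $M_{[c,d]}\cong \Aut(\Q,\leq)$ is nontrivial and each of its elements fixes every point of $\Q_0$ outside $(c,d)_{\circ}$. A nontrivial $g\in M_{[c,d]}$ then fixes $c,d$ and all rationals outside $[c,d]$, which are dense in the complementary closed arc; by continuity its extension $\bar g$ is the identity there, so $Move(\bar g)\subseteq (c,d)_{\T}\subseteq U$. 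With all hypotheses checked, Fact \ref{f:Thm8} gives that $\t_{\T}$ is the \emph{minimum} Hausdorff group topology on the abstract group $G$.

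The least-element claim is then immediate: we already know $\t_{\T}\in \mathcal{T}_{\downarrow}(G,\t_0)$ (as $i_{\T}$ is continuous), and being minimum among all Hausdorff group topologies on $G$ it is coarser than every $\s\in \mathcal{T}_{\downarrow}(G,\t_0)$; hence it is the least element of this semilattice.

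For the closedness statement I would first show that $H$ is $\t_{\T}$-closed and then transfer upward. The stabilizer $St(a_0)$ is the fiber over $a_0$ of the continuous evaluation map $g\mapsto g(a_0)$, hence $\t_{co}$-closed in $\Homeo_+(\T)$; by Lemma \ref{l:emb}(2) one has $H=i_{\T}^{-1}(St(a_0))$, which is therefore closed in the initial topology $\t_{\T}$. Since every $\s\in \mathcal{T}_{\downarrow}(G,\t_0)$ is finer than $\t_{\T}$ by the previous paragraph, and a set closed in a coarser topology remains closed in every finer one, $H$ is $\s$-closed for all such $\s$; that is, $H$ is strongly closed in $G$. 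The main obstacle is thus concentrated entirely in the moving condition of Fact \ref{f:Thm8}, everything else being a formal consequence of $\t_{\T}$ being the minimum topology.
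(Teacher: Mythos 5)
Your proof is correct and follows essentially the same route as the paper: both rest on the Chang--Gartside criterion (Fact \ref{f:Thm8}) with $C_{\T}=\emptyset$ for the minimum-topology claim, and on the $\t_{co}$-closedness of $St(a_0)$ together with Lemma \ref{l:emb}.2 for the strong closedness of $H$; you merely spell out the $Move(g)$ hypothesis (via $M_{[c,d]}$ from Lemma \ref{l:stabilizers}.7), which the paper asserts without detail. One cosmetic point: a rational $z$ outside $U$ need not exist (e.g.\ if $U$ is the complement of a single irrational point), but all that Lemma \ref{l:stabilizers}.7 requires is $z\notin (c,d)_{\circ}$, and such a rational always exists in the complementary arc $(d,c)$.
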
	
\begin{proof} 
	Identifying \(G\) with \(i_{\mathbb T}(G)\), we have a dense subgroup of
	\(\Homeo_+(\mathbb T)\). It satisfies the local movement assumption of
	Fact~\ref{f:Thm8}: indeed, if \(U\subset \mathbb T\) is an open arc, choose
	\(c,d\in\mathbb Q_0\) with \([c,d]_{\mathbb T}\subset U\), and take a nontrivial
	element of \(M_{[c,d]}\). Its extension to \(\mathbb T\) has
	\(Move(g)\subset U\). 
	Therefore,  
	the first part of this lemma can be derived from 
	Fact \ref{f:Thm8}. 
	
	For the second statement, observe that
	\(i_{\mathbb T}(H)=St(a_0)\cap i_{\mathbb T}(G)\) by Lemma~\ref{l:emb}.2,
	and \(St(a_0)\) is closed in the compact-open topology. Hence \(H\) is
	\(\tau_{\mathbb T}\)-closed. Since \(\tau_{\mathbb T}\subseteq\sigma\) for
	every \(\sigma\in\mathcal T_{\downarrow}(G,\tau_0)\), it follows that \(H\)
	is \(\sigma\)-closed for every such \(\sigma\). 
	 
\end{proof} 


Let $\nu:=[a_1, \cdots, a_n]$ (with $n \geq 3$) be a \textit{cycle} in the original  circularly  ordered  
 set $\Q_0$. Define the following disjoint finite covering of $G/H=\Q_0$: 
\begin{equation} \label{eq:cov}  
	cov(\nu):=\{
	\{a_1\}, (a_1,a_2), \{a_2\}, (a_2,a_3), \cdots, \{a_n\},  (a_n,a_1)\}. 
\end{equation}

\begin{fact}  \label{f:GM} \cite{GM-UltraHom}  \   
	\begin{enumerate}
		\item The natural \textit{right uniformity} $\mu_r$ on $G/H=\Q_0$ is precompact, containing a uniform base $B_r$,  where its typical element is the disjoint covering $cov(\nu)$. The completion of $\mu_r$ is the greatest $G$-compactification $\beta_G (G/H)$ of the discrete $G$-space  $G/H=\Q_0$. 
	
		\item $\beta_G (G/H)= \beta_G (\Q_0) =\widehat{G/H}=trip(\T,\Q_0)$. 
		
		\noindent Geometrically, $\beta_G (G/H)=trip(\T,\Q_0)$ is a circularly ordered metrizable compact space  which we get from the ordinary circle $\T$ after replacing any rational point $q\in \Q_0$ by the ordered triple of points $q^{-}, q, q^{+}$. Namely, we have $[q^{-}, q, q^{+}]$ in $\beta_G (G/H)$. 
		The action is given by
		$
		g(q^-)=g(q)^-,\ g(q)=g(q),\ g(q^+)=g(q)^+
		$
		for every \(q\in\mathbb Q_0\) and \(g\in G\).
			\begin{figure}[h]  
			\begin{center} 
				\scalebox{0.4}{\includegraphics{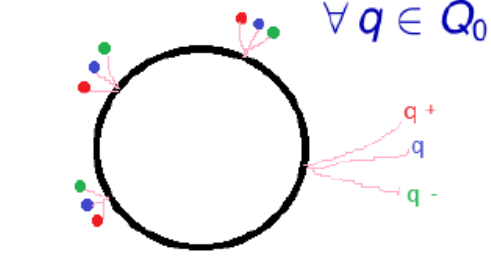}}
				\caption{Geometric description of $\beta_G (\Q_0)$}
			\end{center} 
			\label{fig:two}
		\end{figure}  
		\item $\beta_G (\Q_0)=M(G) \cup \Q_0$ and $M(G)=split(\T,\Q_0)$ is the universal minimal $G$-flow of $G$. 

\noindent 
Geometrically, the compact subflow \(M(G)\) is obtained from \(\mathbb T\) by
splitting each rational point \(q\in\mathbb Q_0\) into the two points
\(q^-\) and \(q^+\); equivalently, it is the subspace of \(trip(\mathbb T,\mathbb Q_0)\)
obtained by deleting the middle points \(q\). 
	\end{enumerate}	
\end{fact}

\begin{lemma} \label{l:orbit} \ 
	\begin{enumerate}
		\item There are four orbits of the action $G \times \beta_G (\Q_0) \to \beta_G (\Q_0)$. 
		Namely: 
		  
		\noindent a) $\Q_0$; b) $\Q^{-}_0$; c) $\Q^{+}_0$; d) $J:=\beta_G (\Q_0) \setminus K$, where $K: = \Q_0 \cup \Q^{-}_0 \cup \Q^{+}_0$.   
		
		\item  The induced action of $G$ on each of the four orbits is c-ultrahomogeneous. 
	\end{enumerate} 
\end{lemma}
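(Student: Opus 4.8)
The plan is to read the four sets off the explicit geometric model and then transport the c-ultrahomogeneity of $G$ on $\Q_0$ across the construction. By Fact \ref{f:GM} we have $\beta_G(\Q_0)=trip(\T,\Q_0)$, and by Lemma \ref{l:emb} the action of $g\in G$ on this compactum is induced by the orientation preserving homeomorphism $\bar g=i_{\T}(g)\in\Homeo_+(\T,\Q_0)$: a rational $q$ goes to $g(q)$, its two germs $q^{-},q^{+}$ go to $(g(q))^{-},(g(q))^{+}$, and an irrational $\xi\in\T\setminus\Q_0$ goes to $\bar g(\xi)$. In this model the four announced sets are exactly $\Q_0$ (the rationals), $\Q_0^{-}$ and $\Q_0^{+}$ (the left and right germs), and $J=\T\setminus\Q_0$ (the irrationals).

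First I would check invariance. Since each $\bar g$ is an orientation preserving autohomeomorphism of $\T$ carrying $\Q_0$ onto $\Q_0$, it maps rationals to rationals, irrationals to irrationals, and (being orientation preserving and continuous) the left, resp. right, germ at $q$ to the left, resp. right, germ at $g(q)$. Hence each of $\Q_0,\Q_0^{-},\Q_0^{+},J$ is $G$-invariant. Equivalently, one may characterize these types order-topologically inside $\beta_G(\Q_0)$ — the rationals are the isolated points, $\Q_0^{-}$ (resp. $\Q_0^{+}$) the immediate predecessors (resp. successors) of isolated points, and $J$ the remainder — and such descriptions are preserved by any circular order homeomorphism.

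Next I would show that $G$ acts c-ultrahomogeneously on each orbit, which yields transitivity in (1) and all of (2) at once. On $\Q_0$ this is the given c-ultrahomogeneity. For $\Q_0^{-}$ and $\Q_0^{+}$ the identity $\hat g(q^{\pm})=(g(q))^{\pm}$ shows that $q\mapsto q^{\pm}$ is a $G$-equivariant circular order isomorphism $\Q_0\to\Q_0^{\pm}$ (the cyclic order among the germs agrees with that among the rationals, as the triples do not interleave), so c-ultrahomogeneity transfers verbatim. The substantial case is $J$: given a circular order preserving bijection $f\colon\{\xi_1,\dots,\xi_k\}\to\{\eta_1,\dots,\eta_k\}$ of finite sets of irrationals, I would build $g\in G$ by back-and-forth on the countable set $\Q_0$, maintaining at each finite stage a circular order preserving partial bijection $h\colon A\to B$ with $A,B\subset\Q_0$ finite that is coherent with $f$, i.e. such that $h\cup f$ stays circular order preserving on $A\cup\{\xi_i\}\to B\cup\{\eta_i\}$. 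When a new rational is added to the domain it lies in a definite arc cut out by $A\cup\{\xi_1,\dots,\xi_k\}$; because $\Q_0$ is dense in $\T$ and the $\eta_i$ are irrational, the corresponding arc cut out by $B\cup\{\eta_1,\dots,\eta_k\}$ contains a rational, which I take as its image (and symmetrically in the forth direction). The resulting $g\in\Aut(\Q_0)$ respects the position of every rational relative to each $\xi_i$, so its continuous orientation preserving extension $\bar g$ sends $\xi_i$ to $\eta_i$; thus $\hat g(\xi_i)=\eta_i$. Invariance together with transitivity then identifies the four pairwise disjoint covering sets as precisely the $G$-orbits, which is (1).

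The main obstacle is the back-and-forth step for $J$: one must verify that coherence with the finite family of irrational markers can always be maintained when enlarging the domain or range, which relies on the density of $\Q_0$ in $\T$ and on the irrationality of the markers (so that every relevant arc is nonempty and carries no marker on its boundary), and that coherence with all rationals forces $\bar g$ to realize $\xi_i\mapsto\eta_i$. The remaining verifications — invariance and the reduction for $\Q_0^{\pm}$ — are routine once the model of Fact \ref{f:GM} and the germ-tracking identity $\hat g(q^{\pm})=(g(q))^{\pm}$ are established.
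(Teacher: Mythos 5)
Your proof is correct, but it follows a genuinely different route from the paper's. The paper never passes to the geometric model for the orbit computation: it works intrinsically with the completion of the precompact right uniformity $\mu_r$ on $G/H=\Q_0$, identifying each point of $\beta_G(\Q_0)$ with a minimal Cauchy filter represented by a decreasing sequence of rational intervals $(a_n,b_n)$ with $d(a_n,b_n)\to 0$ (the four orbit types correspond to four shapes of such filter bases). To map one irrational $u$ to another $v$, the paper builds $g$ as the limit of a Cauchy sequence $g_n$ in the Raikov complete group $G$: each $g_n$ is a globally defined automorphism carrying $[a_n,b_n]$ onto $[c_n,d_n]$ and agreeing with $g_{n-1}$ outside $(a_{n-1},b_{n-1})$, using c-ultrahomogeneity and the ``locally supported'' subgroups $M_{[c,d]}$ of Lemma \ref{l:stabilizers}.7; part (2) is then obtained by running the same construction with finitely many pairwise disjoint initial intervals. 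You instead take the model $\beta_G(\Q_0)=trip(\T,\Q_0)$ of Fact \ref{f:GM}.2 together with the embedding $i_{\T}$ of Lemma \ref{l:emb}, and build $g$ as a union of finite partial circular order preserving maps by back-and-forth on $\Q_0$ relative to the irrational markers. Both constructions are sound; yours is more elementary (no completeness of $G$, no Cauchy-sequence bookkeeping) and yields transitivity and c-ultrahomogeneity on $J$ in a single induction, while the paper's stays self-contained in the uniform-completion picture and its interval-supported approximation technique is reused almost verbatim later (e.g.\ in Lemma \ref{l:maximality}, where one additionally needs $g$ to fix a prescribed irrational). The one point you should make explicit is the identification of the abstract $G$-action on $\beta_G(\Q_0)$ with the germ-tracking action induced by $\bar g$: this does not literally follow from the statement of Lemma \ref{l:emb}, but it is routine --- the germ-tracking map is a circular order automorphism of $trip(\T,\Q_0)$, hence a homeomorphism extending the action on the dense subset $\Q_0$, and continuous extensions to a Hausdorff compactification are unique.
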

\begin{proof}  (1) 
	$\widehat{G/H}$ is the completion of the precompact uniform space $(G/H,\mu_r)$ (Fact \ref{f:GM}.1). 
	Every element $u \in \widehat{G/H}$ can be identified with a \textit{minimal Cauchy-filter} in $G/H=\Q_0$ (see \cite{Bourb}).  
	Sometimes it is more convenient to use a suitable filter base $\xi_u$ such that the corresponding (uniquely defined)  minimal Cauchy filter represents $u$. 
	\sk 
		a) 
	The most clear case is the element $u=q\in \Q_0$. Each of them is isolated and the singleton $\xi_q:=\{q\}$ is a filter base which represents the point $q$.  
	
	\sk 
	For any $u \notin \Q_0$,  
	there exists a suitable $\mu_r$-Cauchy filter base $\xi_u$  such that it is a decreasing sequence of intervals $\xi_u:=\{(a_n,b_n) : n \in \N\}$. That is, $a_n, b_n \in \Q_0$ and 
	$$(a_n,b_n)  \supseteq (a_{n+1}, b_{n+1})$$ for every $n \in \N$.
	Since it is a $\mu_r$-Cauchy filter base and $d|_{\Q_0}$ is $\mu_r$ uniform, we have $\lim_{n \in \N} d(a_n,b_n)=0$, where $d$ is a metric of the circle $\T$. 
	
	Conversely, any decreasing sequence of intervals with this smallness condition implies that this sequence $\xi_u$ is a  $\mu_r$-Cauchy filter base.   
	
	\sk 
	b) For every $u=q^{-}$ consider any decreasing sequence of intervals  $(a_n,b_n)=(q,b_n)$ in $\Q_0$ such that (all $a_n=q$ and)  $\lim_{n \in \N} d(q,b_n)=0$. 
	
	\sk 
	c) For every $u=q^{+}$ consider any decreasing sequence of intervals  $(a_n,q)$ in $\Q_0$ such that (all $b_n=q$ and)  
	$\lim_{n \in \N} d(a_n,q)=0$.  
	
	\sk 
	
	d) Now, let $u$ be ``irrational", that is, $u \in J$. 
	Then there exists a suitable decreasing sequence $\xi_u=\{(a_n,b_n)\}$ of intervals with 
	 $\lim d(a_n,b_n)=0$ such that, in addition, 
	 $a_n \neq a_{n+1}, \ b_n \neq b_{n+1}$ for every $n \in \N$.    
	
\sk 
	First three $G$-orbits $\Q_0$,  $\Q^{-}_0$, $\Q^{+}_0$ are understood. 
	
	We show 
	that $J=\beta_G (\Q_0) \setminus K$ is a $G$-orbit.  
	 Let $u, v \in J$ be two distinct irrational elements.   
	 Consider the corresponding decreasing sequences of intervals (which act as filter bases, as described in (d)): 
	 
	 $$\xi_u=\{(a_n,b_n): n \in \N\}, \ \ \ \xi_v= \{(c_n,d_n): n \in \N\}.$$
	 
	 Since \(u \neq v\), we can assume that
	 \([a_1,b_1] \cap [c_1,d_1]=\emptyset\). 
	  Our aim is to show that there exists $g \in G$ such that $gu=v$. We will explore the Raikov completeness of $G$ constructing the desired $g$ as the limit of a certain Cauchy sequence $g_n$ (with respect to the two-sided  uniformity ${\mathcal U}_{l \vee r}$ of $G$).  We will construct $g$ inductively using c-ultrahomogeneity of $\Q_0$.  
	 First take $g_1 \in G$ such that 
	 $$
	 g_1 [a_1,b_1] =[c_1,d_1].   
	 $$ 

Using Lemma~\ref{l:stabilizers}.7, choose
\(h_2\in M_{[c_1,d_1]}\) such that
\[
h_2g_1[a_2,b_2]=[c_2,d_2].
\]
Set \(g_2:=h_2g_1\). Since \(h_2\) fixes every point outside
\((c_1,d_1)\), and \(g_1\) maps the complement of \((a_1,b_1)\) onto the
complement of \((c_1,d_1)\), we have
\[
g_2(x)=g_1(x)\quad \forall x\notin (a_1,b_1).
\]
Also,
\[
g_2^{-1}(x)=g_1^{-1}(x)\quad \forall x\notin (c_1,d_1).
\]

Inductively, suppose that \(g_{n-1}[a_{n-1},b_{n-1}]=[c_{n-1},d_{n-1}]\).
Choose \(h_n\in M_{[c_{n-1},d_{n-1}]}\) such that
\[
h_ng_{n-1}[a_n,b_n]=[c_n,d_n],
\]
and set \(g_n:=h_ng_{n-1}\). Then
\[
g_n[a_n,b_n]=[c_n,d_n],
\]
and
\[
g_n(x)=g_{n-1}(x)\quad \forall x\notin (a_{n-1},b_{n-1}),
\]
while
\[
g_n^{-1}(x)=g_{n-1}^{-1}(x)\quad \forall x\notin (c_{n-1},d_{n-1}).
\]
It follows that, for every \(i,j>n\),
\begin{equation}\label{eq:cauchy-gn}
	g_i(x)=g_j(x)\quad \forall x\notin (a_n,b_n),
	\qquad
	g_i^{-1}(x)=g_j^{-1}(x)\quad \forall x\notin (c_n,d_n).
\end{equation}

We claim that \((g_n)\) is a Cauchy sequence in \(G\) with respect to the
two-sided uniformity \(\mathcal U_{l\vee r}\). Let
\(G_{t_1,\ldots,t_k}\) be a basic neighbourhood of \(e\) in \(G\). Since
\(\bigcap_n [a_n,b_n]\) and \(\bigcap_n [c_n,d_n]\) are irrational points,
we may choose \(n_0\) such that
\[
\{t_1,\ldots,t_k\}\cap [a_{n_0},b_{n_0}]=\emptyset,
\qquad
\{t_1,\ldots,t_k\}\cap [c_{n_0},d_{n_0}]=\emptyset .
\]
Then, for every \(n,m>n_0\), Equation~\ref{eq:cauchy-gn} gives
\[
g_n(t_i)=g_m(t_i)
\quad\text{and}\quad
g_n^{-1}(t_i)=g_m^{-1}(t_i)
\qquad (i=1,\ldots,k).
\]
Equivalently,
\[
g_n^{-1}g_m\in G_{t_1,\ldots,t_k}
\quad\text{and}\quad
g_ng_m^{-1}\in G_{t_1,\ldots,t_k}.
\]
Thus \((g_n)\) is two-sided Cauchy. Since \(G\) is Polish, hence Raikov
complete, there exists \(g\in G\) such that \(g_n\to g\) in the two-sided
uniformity. By construction, \(g[a_n,b_n]=[c_n,d_n]\) for every \(n\), in
the sense that the Cauchy filter base \(\{(a_n,b_n)\}\) is sent to the
Cauchy filter base \(\{(c_n,d_n)\}\). Therefore \(gu=v\).

\sk 
(2) The induced actions of $G$ on the orbits (as circularly ordered sets) $\Q_0^{-}$ or $\Q_0^{+}$ are circularly identical with the original action of $G$ on $\Q_0$. 
As to the action of $G$ on $J$, let  
\[
f \colon U=\{u_1,\ldots,u_m\}\to V=\{v_1,\ldots,v_m\}
\]
 be a circular isomorphism between finite subsets of $J$. 
For every $u_i$ and $v_i$ choose the corresponding filter bases 
$\xi_{u_i}, \xi_{v_i}$ in such a way that the initial intervals 
$[a_{1i},b_{1i}]$ of all $u_i$-s are pairwise disjoint and similarly the 
initial intervals 
$[c_{1i},d_{1i}]$ of all $v_i$-s are pairwise disjoint. 

Applying the preceding inductive construction (similar to the part (1)) simultaneously on these pairwise
disjoint initial intervals, and using circular ultrahomogeneity at each finite
stage, we obtain \(g\in G\) such that \(g(u_i)=v_i\) for all \(i=1,\ldots,m\).  

\end{proof}

\begin{lemma} \label{l:maximality}  
	For every $z \in \beta_G (\Q_0)$ the stabilizer subgroup $G_z$ (defined for the action of $G$ on $\beta_G (\Q_0)$) is maximal in $G$. 
\end{lemma}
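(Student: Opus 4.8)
The plan is to mirror the proof of Lemma \ref{l:stabilizers}.5 almost verbatim, the only change being that the role played there by the c-ultrahomogeneity of the $G$-action on $\Q_0$ is now played by the c-ultrahomogeneity of the induced $G$-action on the orbit of $z$. By Lemma \ref{l:orbit}.1 the point $z$ lies in exactly one of the four $G$-orbits $O \in \{\Q_0, \Q^{-}_0, \Q^{+}_0, J\}$, and by Lemma \ref{l:orbit}.2 the induced action of $G$ on $O$ is c-ultrahomogeneous. Since each orbit is $G$-invariant, the entire argument can be run inside $O$; in particular $g(z) \in O$ for every $g \in G$, and because $O$ is infinite we have $G_z \subsetneq G$, so maximality is a meaningful assertion.

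The key intermediate step is the claim that the stabilizer $G_z$ acts transitively on $O \setminus \{z\}$. To prove it, fix $p, q \in O \setminus \{z\}$ and consider the bijection $\{z,p\} \to \{z,q\}$ sending $z \mapsto z$ and $p \mapsto q$. As a map between two-element subsets it is (vacuously) circular-order preserving, so by the c-ultrahomogeneity of the action of $G$ on $O$ it extends to some $\varphi \in G$; since $\varphi(z)=z$ we get $\varphi \in G_z$ with $\varphi(p)=q$. (The same remark applies to the two-element case of Lemma \ref{l:orbit}.2, which covers all finite subsets of $O$.)

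With transitivity established, maximality follows by the coset computation of Lemma \ref{l:stabilizers}.5. Suppose $G_z \subseteq P \subseteq G$ with $G_z \neq P$, and pick $g \in P \setminus G_z$, so that $g(z) \in O \setminus \{z\}$. Let $f \in G$ be arbitrary; we must show $f \in P$. If $f(z)=z$ then $f \in G_z \subseteq P$, so assume $f(z) \in O \setminus \{z\}$. By the transitivity claim choose $\varphi \in G_z$ with $\varphi(f(z))=g(z)$. Then $g^{-1}\varphi f$ fixes $z$, hence $h:=g^{-1}\varphi f \in G_z \subseteq P$, and therefore $f=\varphi^{-1} g h \in G_z \cdot P \cdot G_z = P$. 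Thus $P=G$, proving that $G_z$ is maximal.

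The maximality deduction itself is a routine coset manipulation; the substantive content has already been absorbed into Lemma \ref{l:orbit}. For the orbits $\Q^{-}_0$ and $\Q^{+}_0$ nothing genuinely new is needed, since their induced actions are circularly identical to the action of $G$ on $\Q_0$. The one place I expect to lean on real work is the irrational orbit $J$: the transitivity of $G_z$ on $J \setminus \{z\}$ ultimately rests on the Raikov-completeness construction of elements of $G$ as limits of Cauchy sequences carried out in the proof of Lemma \ref{l:orbit}.1. So the only potential obstacle is making sure the c-ultrahomogeneity transferred to $J$ (including the two-point case used above) is exactly what the argument requires, which it is.
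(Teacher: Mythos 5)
Your proof is correct, and it follows the same two-step skeleton as the paper's: reduce the maximality of $G_z$ to the transitivity of $G_z$ on the punctured orbit, then run the double-coset computation of Lemma \ref{l:stabilizers}.5 (your coset manipulation $f=\varphi^{-1}g(g^{-1}\varphi f)\in G_z\cdot P\cdot G_z=P$ is exactly the paper's). The difference is in how the transitivity is sourced, and in the case structure. The paper argues in three cases: for $z\in\Q_0$ it simply cites Lemma \ref{l:stabilizers}.5; for $z\in\Q_0^{\pm}$ it reduces to that case via the identification $G_{z^{-}}=G_{z^{+}}=G_z$ (Fact \ref{f:GM}.3); and for $z\in J$ it re-runs the Raikov-completeness/Cauchy-sequence construction from Lemma \ref{l:orbit}, with the extra constraint that $g_1$ be the identity on an interval $[s_1,t_1]$ around $z$, to produce for distinct $z,u,v\in J$ an element $g\in G$ with $gz=z$, $gu=v$. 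You instead treat all four orbits uniformly and extract the same transitivity statement from the two-point case of the c-ultrahomogeneity asserted in Lemma \ref{l:orbit}.2: a bijection between two-element subsets is vacuously circular-order preserving, hence extends to an element of $G$ fixing $z$. This is legitimate --- the paper itself invokes the two-point case of c-ultrahomogeneity inside the proof of Lemma \ref{l:stabilizers}.5 --- and it buys a shorter, case-free argument. What it costs is that the entire burden for the irrational orbit is shifted onto Lemma \ref{l:orbit}.2, whose proof in the paper is only sketched; the paper's explicit ``hint'' (identity on $[s_1,t_1]$) is precisely the detail needed to make that sketch cover your degenerate fixed-point instance. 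So both proofs ultimately rest on the same completeness construction: yours accesses it as a black box through the stated lemma, while the paper re-derives the special case it needs.
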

\begin{proof}
	If $z \in \Q_0$ then apply Lemma \ref{l:stabilizers}.5. 
	
	If \(z=q^-\in \Q^-_0\) or \(z=q^+\in \Q^+_0\), where \(q\in \Q_0\), then
	$
	G_{q^-}=G_q=G_{q^+},
	$
	by the definition of the action \(g(q^-)=g(q)^-\) and \(g(q^+)=g(q)^+\) on \(\beta_G(\Q_0)\)  from
	Fact~\ref{f:GM}. 
	
	Assume that $z \in J$ is irrational. 
	In the proof of Lemma \ref{l:orbit} we showed that the subset of all irrational points $J$ is a $G$-orbit. An easy modification shows that for every distinct points $z, u, v  \in J$ there exists $g \in G$ such that $gz=z, gu=v$. 
	That is, $G_z$ is transitive on $J \setminus \{z\}$. 
	One may choose corresponding filter bases:  
	$$\xi_z=\{(s_n,t_n)\}, \ \ \ \xi_u=\{(a_n,b_n): n \in \N\}, 
	\ \ \ \xi_v= \{(c_n,d_n): n \in \N\}$$
	such that $[s_1,t_1] \cap  [a_1,b_1]=\emptyset$, $[s_1,t_1] \cap [c_1,d_1]=\emptyset$, $[a_1,b_1] \cap [c_1,d_1]=\emptyset$. Under this requirement, one may choose $g_1 \in G$ such that $g_1$ is the identity on $[s_1,t_1]$ and $g_1[a_1,b_1]=[c_1,d_1]$. The rest is similar to the proof of Lemma \ref{l:orbit}.     
	
	Now, using this transitivity property of $G_z$, we prove the maximality of $G_z$ in $G$ using the arguments that  we explored in the proof of Lemma \ref{l:stabilizers}.5 (this time we consider the action of $G$ on the circularly ordered set $J$).   
	
	Indeed, let \(G_z\leq P\leq G\) and suppose that \(P\neq G_z\). Choose
	\(p\in P\setminus G_z\). Then \(p(z)\neq z\). We show that \(P=G\).
	Let \(f\in G\). If \(f(z)=z\), then \(f\in G_z\subseteq P\). Otherwise,
	by the transitivity of \(G_z\) on \(J\setminus\{z\}\), there exists
	\(\varphi\in G_z\) such that
	\[
	\varphi(f(z))=p(z).
	\]
	Hence \(p^{-1}\varphi f\in G_z\), and therefore
	\[
	f\in \varphi^{-1}pG_z\subseteq G_zPG_z=P.
	\]
	Thus \(P=G\), proving that \(G_z\) is maximal in \(G\). 	
	
\end{proof}

\begin{remark} \label{r:altern} 
	Let \(G/H\) be an abstract transitive \(G\)-space and let
	\(f:G/H\to Y\) be an onto \(G\)-map. Put \(y_0:=f(H)\) and
	\(H_1:=G_{y_0}=\operatorname{St}_G(y_0)\). Then \(H\leq H_1\leq G\), and \(Y\) is naturally
	\(G\)-isomorphic to \(G/H_1\). Consequently, if \(H\) is a maximal subgroup
	of \(G\), then either \(H_1=H\), in which case \(f\) is bijective, or
	\(H_1=G\), in which case \(f\) is constant.
\end{remark}

Assertion (a) of Lemma \ref{l:list} was proved (using different arguments) in \cite{GM-UltraHom}. 

\begin{lemma} \label{l:list} \ 
	\begin{itemize}
		\item [(a)]  \cite{GM-UltraHom}
		The \(G\)-flow
		$
		M(G)=\mathbb Q_0^-\cup J\cup \mathbb Q_0^+
		$
		has, up to equivalence, only one nontrivial proper \(G\)-factor, namely the
		factor onto \(\mathbb T\) obtained by identifying \(q^-\) with \(q^+\) for
		every \(q\in\mathbb Q_0\). Thus the only \(G\)-factors of \(M(G)\) are the singleton, \(\mathbb T\),
		and \(M(G)\) itself.
		\item [(b)] 
	Consider the following $G$-factors of $\beta_G (\Q_0)=\Q^- \cup J \cup \Q^+ \cup \Q_0$. 	
	\begin{enumerate}
		\item $f_1 \colon \beta_G (\Q_0) \to \T$ with $f_1(q^{-})=f_1(q)=f_1(q^{+})=q$ and $f_1(x)=x$ for every 
		$q \in \Q_0$ and every $x \in J$. 
		\item $f_2 \colon \beta_G (\Q_0) \to \Q_0 \cup \{*\}$,  where $\Q_0 \cup \{*\}$ is the one-point  compactification of the discrete set $\Q_0$. In this case $f_2(M(G))=*$ and $f_2(q)=q$ for every $q \in \Q_0$.  
		\item $f_3 \colon \beta_G (\Q_0) \to M(G)$ with $f_3(q)=q^{-}$, $f_3(x)=x$ for every $q \in \Q_0, x \in M(G)$.   
		\item $f_4 \colon \beta_G (\Q_0) \to M(G)$ with $f_4(q)=q^{+}$, $f_4(x)=x$ for every $q \in \Q_0, \ x \in M(G)$.   
				\item $f_5 \colon \beta_G (\Q_0) \to \T \cup \Q_0$ with $f_5(q^{+})=f_5(q^{-})=i(q)$ for every $q \in \Q_0$ and $f_5(x)=x$ for every $x \notin  (\Q^{-}_0 \cup \Q^{+}_0)$.  Here $i(\Q_0)$ is the usual 
				rational part of the circle $\T$.  
		\item $f_6:=id \colon \beta_G (\Q_0) \to \beta_G (\Q_0)$. 
		
		\sk 
		Each of these maps is continuous, \(G\)-equivariant, and injective on the
		middle orbit \(\mathbb Q_0\). 
		Let $\nu \colon \Q_0 \to Y$ be a (not necessarily, proper) \textbf{injective} $G$-compactification. Then $\nu$ is equivalent to one of the compactifications $f_i|_{\Q_0}$, where $i \in \{1,\cdots,6\}$.  
	\end{enumerate} 
		\end{itemize}
	
\end{lemma}
\begin{proof}   
	We start with a simple observation which will be used several times.
	
	\smallskip
	\noindent\textit{Claim.}
	Let \(q\in \mathbb Q_0\) and \(u\in J\). Then
	\[
	G_{q^-}=G_q=G_{q^+},
	\]
	and this subgroup fixes exactly one point of \(\mathbb Q_0\), namely \(q\).
	On the other hand, \(G_u\) fixes no point of \(\mathbb Q_0\). Consequently,
	\(G_u\) is not conjugate to any of the subgroups
	\(G_q=G_{q^-}=G_{q^+}\).
	
	\sk 
	Indeed, the equality \(G_{q^-}=G_q=G_{q^+}\) follows from the definition of
	the action on \(trip(\mathbb T,\mathbb Q_0)\). Clearly \(G_q\) fixes \(q\);
	and, by circular ultrahomogeneity, no other point of \(\mathbb Q_0\) is fixed
	by all elements of \(G_q\). If \(u\in J\), then cutting the circle at \(u\)
	identifies \(\mathbb Q_0\) with a countable dense linearly ordered set without
	endpoints, and the induced action of \(G_u\) on this set is the usual
	ultrahomogeneous action of \(\Aut(\mathbb Q,<)\). Hence \(G_u\) has no fixed
	point in \(\mathbb Q_0\). Since conjugation preserves the fixed-point set in
	\(\mathbb Q_0\), the asserted non-conjugacy follows.
	\smallskip

	(a) Let \(f \colon M(G)\to Y\) be a nontrivial \(G\)-factor. Since \(M(G)\) is
	minimal, \(Y\) is minimal. The three \(G\)-orbits
	\(\mathbb Q_0^-\), \(J\), and \(\mathbb Q_0^+\) are dense in \(M(G)\).
	Hence the restriction of \(f\) to any of these orbits is either injective
	or constant, by Remark~\ref{r:altern} and Lemma~\ref{l:maximality}. If the
	restriction to one of these dense orbits is constant, then \(Y\) is a
	singleton. Thus, for a nontrivial factor, \(f\) is injective on each of the
	three orbits. 
	It remains to understand possible identifications between different orbits.
	
	There is no nonconstant \(G\)-map from \(J\) onto either
	\(\mathbb Q_0^-\) or \(\mathbb Q_0^+\). Indeed, such a map would force,
	by maximality of point stabilizers, \(G_u\) to be conjugate to
	\(G_{q^-}=G_q=G_{q^+}\), contradicting the claim.
	
 On the other hand, the two orbits \(\mathbb Q_0^-\) and \(\mathbb Q_0^+\) are equivariantly
	identified by the map \(q^-\mapsto q^+\), since
	\(G_{q^-}=G_q=G_{q^+}\). Therefore the only possible proper nontrivial
	identification is
	$
	q^-\sim q^+ \ (q\in\mathbb Q_0),
	$
	and the resulting factor is the usual circle \(\mathbb T\).

	(b) By Lemma~\ref{l:orbit}, the action of \(G\) on \(\beta_G(\mathbb Q_0)\)
	has exactly four orbits:
	\[
	\mathbb Q_0,\qquad \mathbb Q_0^-,\qquad \mathbb Q_0^+,\qquad J .
	\]
	Let \(f:\beta_G(\mathbb Q_0)\to Y\) be a \(G\)-factor which is injective on
	\(\mathbb Q_0\). By Remark~\ref{r:altern} and Lemma~\ref{l:maximality}, the
	restriction of \(f\) to each orbit is either injective or constant. Since
	\(f\) is injective on \(\mathbb Q_0\), the orbit \(\mathbb Q_0\) is not
	collapsed.
	
	If one of the three orbits contained in \(M(G)\) is collapsed to a point,
	then, since \(M(G)\) is minimal, the whole subflow \(M(G)\) is collapsed to
	one point. This gives case \(f_2\).
	
	Assume now that no orbit in \(M(G)\) is collapsed. Then \(f\) is injective
	on each of the four orbits. Hence a proper factor can only arise from
	identifying points lying in different orbits. Such identifications must be
	\(G\)-invariant. The only possible equivariant identifications between
	different orbits are the following rational twin identifications:
	\[
	q^-\sim q^+,\qquad q\sim q^-,\qquad q\sim q^+
	\quad (q\in\mathbb Q_0).
	\]
	By the claim, there is no nonconstant \(G\)-map from \(J\) onto any of
	\(\mathbb Q_0,\mathbb Q_0^-,\mathbb Q_0^+\), since the corresponding point
	stabilizers are not conjugate. 
	Thus the possible proper quotients are only: 
	\begin{itemize}
		\item 
		$q^-\sim q^+ \quad\text{for all }q,
		$ which gives \(f_5\);
		\item $q\sim q^-\quad\text{for all }q,$ 
		which gives \(f_3\);
		\item $q\sim q^+\quad\text{for all }q,$ 
		which gives \(f_4\); 
		\item $
		q^-\sim q\sim q^+\quad\text{for all }q,
		$ 
		which gives \(f_1\). 
	\end{itemize} 
	Together with the case \(f_2\), where \(M(G)\) is
	collapsed, and the identity case \(f_6\), this gives exactly the six
	compactifications listed above. 
	
\end{proof}

Note that topologically $\T \cup \Q_0$, in item (5), is a natural subspace of the classical (Alexandrov)  \textit{double circle} $\T \cup \T_{discr}$, where $\Q_0$ is a subset of the discrete copy $\T_{discr}$ of $\T$. 

Many remarkable results about automorphism groups of ordered spaces were obtained recently by B.V. Sorin, G.B. Sorin and K.L. Kozlov.  
Maximal $G$-compactifications of 
ultratransitive $G$-actions on discrete circularly (and linearly) ordered sets and their $G$-factors,  admit a general description \cite{Sorin25-Sbornik,KS}.

\sk

Lemma \ref{l:list} gives a list of all possible $G$-factors of $\beta_G (\Q_0)$ (which are injective on $\Q_0=G/H$). 
Hence, one may apply Lemma \ref{l:CosetTopologies} getting important information about all admissible topologies on 
$(G/H,\s/H)$, where $\s \in \mathcal{T}_{\downarrow}(G,\t_0)$.  

The following result asserts that,  
%
%
%
%
in our setting, we have only two possibilities for ``admissible" topologies on $\Q_0=G/H$. One is the original discrete topology $\t_0/H$. The second is the topology coming from the embedding $i_{\T}$ (and compact-open topology) described in Lemma \ref{l:emb}.  

\begin{lemma} \label{l:equalities} 
	$
	\{\s/H: \  \s \in \mathcal{T}_{\downarrow}(G,\t_0)\} = 	\{\g/H: \  \g \in \Sigma_H(G)\}=\{\t_0/H, \ \t_{\T}/H \}. 
	$
	More precisely,
	\begin{enumerate}
		\item  For every $\g \in \Sigma_H(G),$ the equality  $\g/H=\t_{\T}/H$ holds only for the first factor (1) in the list of Lemma \ref{l:list}. 
		\item 	In cases (2), (3), (4), (5), (6) of Lemma \ref{l:list}, the corresponding compact-open topology $\g$ on $G$ is the original non-archimedean  topology $\t_0$.   
	\end{enumerate}  
\end{lemma}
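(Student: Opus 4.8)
The plan is to prove the sharper statements (1)--(2), from which the displayed chain of equalities follows immediately: by the paragraph preceding the lemma we already know that $\{\s/H : \s \in \mathcal{T}_{\downarrow}(G,\t_0)\} = \{\g/H : \g \in \Sigma_H(G)\}$, so it suffices to identify, for each of the six factors $f_i$ of Lemma \ref{l:list}(b), the topology $\g_i \in \Sigma_H(G)$ it induces on $G$ (the coset topology $\g_i/H$ is then determined by $\g_i$). Recall that every $\g_i$ is a compact open topology coming from an action on a compact space, hence $\g_i \subseteq \t_0$; and that by Lemma \ref{l:stabilizers}(3) the subgroups $G_{a_1,\dots,a_n}$ form a base of $\t_0$ at $e$. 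Consequently, to prove $\g_i = \t_0$ it is enough to show that each point stabilizer $G_q$ ($q \in \Q_0$) is $\g_i$-open.

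For the factors $f_2, f_5, f_6$ this is immediate: in each target space $Y$ (the one-point compactification $\Q_0 \cup \{*\}$, the double-circle subspace $\T \cup \Q_0$, and $\beta_G(\Q_0)$ itself) every $q \in \Q_0$ is an isolated point, so the singleton $\{q\}$ is simultaneously compact and open and the basic compact-open set $\{g \in G : gq = q\} = G_q$ is $\g_i$-open. Hence $\g_2 = \g_5 = \g_6 = \t_0$, and the corresponding coset topology is the discrete $\t_0/H$. At the opposite extreme is the factor $f_1 \colon \beta_G(\Q_0) \to \T$: here the target is the genuine circle with no splitting, the induced topology is exactly the compact open topology $\t_\T$ pulled back along $i_\T$ (Lemma \ref{l:emb}), which by Lemma \ref{l:minimum} is the minimum element of $\mathcal{T}_{\downarrow}(G,\t_0)$, and the coset topology $\t_\T/H$ is non-discrete by Lemma \ref{l:HisnotCo-min}. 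Thus $f_1$ is the unique factor producing $\t_\T/H$.

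The crux is the two ``split'' factors $f_3, f_4 \colon \beta_G(\Q_0) \to M(G)$, for which the orbit of $a_0$ is $\Q_0^-$ (resp.\ $\Q_0^+$) and $a_0$ is no longer isolated; here I must still produce $\g_3 = \g_4 = \t_0$. The key observation is that in $M(G) = \Q_0^- \cup J \cup \Q_0^+$ each rational carries \emph{both} of its split points $q^-, q^+$, and that $g \cdot q^{\pm} = (gq)^{\pm}$ for all $g \in G$. Because $q^+$ is the immediate successor of $q^-$ in the circular order of $M(G)$, convergence to $q^-$ in the interval topology is one-sided from below and convergence to $q^+$ is one-sided from above. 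I will exploit this by choosing, for fixed $q$, intervals $U^- = (\alpha, q^+)$ and $U^+ = (q^-, \delta)$ (neighborhoods of $q^-$ and $q^+$ respectively) and forming the $\g_3$-open set $\{g : gq^- \in U^-\} \cap \{g : gq^+ \in U^+\}$. Writing $b = gq$, membership forces $b^- \le q^-$ and $b^+ \ge q^+$, i.e.\ $b \le q$ and $b \ge q$ in the local linear order, whence $b = q$ and $g \in G_q$. Thus $G_q$ is $\g_3$-open, so $\g_3 = \g_4 = \t_0$ and the coset topology is again $\t_0/H$. The main obstacle is precisely this pinning step: a single point of a non-split orbit (as on $\T$ in case $f_1$) only yields two-sided convergence and cannot force $gq = q$, and it is exactly the presence of the twin points $q^-, q^+$ in $M(G)$ that upgrades the compact open topology back to the full $\t_0$. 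Assembling the three paragraphs gives $\{\g/H : \g \in \Sigma_H(G)\} = \{\t_0/H, \t_\T/H\}$ with the stated attribution of factors, completing the proof.
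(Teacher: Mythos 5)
Correct, and essentially the paper's own argument: you reduce to computing the compact open topology $\g_i$ for each of the six factors of Lemma \ref{l:list}, settle cases (2), (5), (6) via the isolated points of $\Q_0$ in the target, identify $\g_1$ with $\t_{\T}$, and in the twin cases $f_3,f_4$ show that a point stabilizer $G_q$ is $\g_i$-open and then invoke Lemma \ref{l:stabilizers}. The only (harmless) divergence is the pinning step: you use the subbasic $\g_i$-neighborhood $\{g: gq^-\in U^-\}\cap\{g: gq^+\in U^+\}$ together with the adjacency of $q^-,q^+$ in $M(G)$, whereas the paper uses the setwise stabilizer $G_A$ of the clopen interval $A=[a^+,b^-]$, whose endpoint $a^+$ must be fixed by any order-preserving $g$ with $gA=A$; both devices exploit exactly the same phenomenon, namely the splitting of the rationals in $M(G)$.
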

\begin{proof} 
The first equality directly comes from Lemma \ref{l:CosetTopologies}. Now, it is enough to show that 
$\g/H \in \{\t_0/H, \ \t_{\T}/H \}$ for every $\g \in \Sigma_H(G)$.

Let $f_i \colon \beta_G (\Q_0) \to  Y$ be the quotients from Lemma \ref{l:list}.b. 
We have the corresponding compact-open topologies $\g_i \in \Sigma_H(G)$ on $G$.   
Let \(f_i:\beta_G(\Q_0)\to Y_i\) be one of the quotients from
Lemma~\ref{l:list}. 
We denote by \(\gamma_i\) the corresponding
compact-open topology on \(G\).

For \(i=1\), the compactification is the canonical circle compactification.
Hence the induced topology on \(G/H=\Q_0\) is exactly \(\tau_{\mathbb T}/H\).

We now show that \(\gamma_i=\tau_0\) for \(i=2,3,4,5,6\). For \(i=2,5,6\),
the compact \(G\)-space \(Y_i\) contains the orbit \(\Q_0\) as a discrete
\(G\)-subspace. Hence, for every \(a \in \Q_0\), the stabilizer \(G_a\) is 
\(\gamma_i\)-open. Since \(\gamma_i\subseteq\tau_0\), Lemma~\ref{l:stabilizers}.4
implies \(\gamma_i=\tau_0\).

It remains to consider the twin cases \(i=3,4\), where \(Y_i=M(G)\). Let 
\(a,b\in \Q_0\) be distinct. In \(M(G)=\Q_0^-\cup J\cup \Q_0^+\), the interval
$
A:=[a^+,b^-]=(a^-,b^+)
$
is clopen. Therefore its setwise stabilizer
$
G_A:=\{g\in G:gA=A\}
$
is a \(\gamma_i\)-open subgroup of \(G\). Since the action preserves the
circular order, every \(g\in G_A\) must fix the endpoint \(a^+\), and hence
must fix \(a\). Thus \(G_A\subseteq G_a\). Consequently \(G_a\) is
\(\gamma_i\)-open. Again Lemma~\ref{l:stabilizers}.4 gives
\(\gamma_i=\tau_0\). 

\end{proof}

\begin{corollary} \label{c:happy} 
	Let 
	$\s \in  \mathcal{T}_{\downarrow}(G,\t_0)$ such that 
	$\s/H=\t_{0}/H.$ Then $\s=\t_0$. 
\end{corollary}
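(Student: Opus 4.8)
The plan is to exploit the discreteness of the prescribed coset topology together with the rigidity of stabilizer neighborhoods established in Lemma \ref{l:stabilizers}.4. First, recall that the coset $G$-space $(G/H, \t_0/H)$ is the discrete copy of $\Q_0$. Hence the hypothesis $\s/H = \t_0/H$ says precisely that the coset topology induced by $\s$ on $G/H = \Q_0$ is discrete.

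Next, I would transfer this discreteness back from the quotient to $G$ itself. The canonical projection $q \colon (G,\s) \to (G/H, \s/H)$ is continuous by the very definition of the coset (quotient) topology. Since the coset $\{H\} = \{a_0 H\}$ is a singleton, it is $\s/H$-open by discreteness, and therefore its preimage $q^{-1}(\{H\}) = H = G_{a_0}$ is $\s$-open in $G$. In particular $G_{a_0}$ is a $\s$-neighborhood of the neutral element, i.e. $G_{a_0} \in \s$.

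Finally, I would apply Lemma \ref{l:stabilizers}.4 with the point $a := a_0 \in \Q_0$: the existence of a point whose stabilizer is $\s$-open forces $\s = \t_0$. (Recall that the proof of that lemma uses the conjugation formula $gG_ag^{-1} = G_{g(a)}$ from Lemma \ref{l:stabilizers}.1 together with transitivity of the $G$-action on $\Q_0$, so that $\s$-openness of one stabilizer propagates to all the basic stabilizer subgroups of $\t_0$.) This yields $\s = \t_0$, as required.

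I do not expect a genuine obstacle here: the corollary is an immediate consequence of the discreteness of $\t_0/H$ and the stabilizer-rigidity Lemma \ref{l:stabilizers}.4, and it does not even require the full two-element classification of admissible coset topologies from Lemma \ref{l:equalities}. The only step deserving attention is the elementary but essential observation that continuity of the quotient map converts discreteness of $\s/H$ at the base point into $\s$-openness of the subgroup $H$ itself.
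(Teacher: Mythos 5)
Your proof is correct, and it takes a genuinely different, more elementary route than the paper's. The paper deduces the corollary from Lemma~\ref{l:equalities} together with the discussion preceding it: for every $\s \in \mathcal{T}_{\downarrow}(G,\t_0)$ there exists a compact-open topology $\g \in \Sigma_H(G)$ (arising from one of the $G$-factors of $\beta_G(\Q_0)$ classified in Lemma~\ref{l:list}) with $\g \subseteq \s$ and $\g/H = \s/H$; since $\g/H=\t_0/H$ forces $\g=\t_0$ by Lemma~\ref{l:equalities}.2, one gets $\t_0 = \g \subseteq \s \subseteq \t_0$. You bypass that compactification machinery entirely: discreteness of $\s/H=\t_0/H$ makes the singleton $\{a_0H\}$ open, continuity of the quotient projection $q\colon (G,\s)\to (G/H,\s/H)$ makes $H=q^{-1}(\{a_0H\})=G_{a_0}$ a $\s$-open subgroup, and Lemma~\ref{l:stabilizers}.4 (whose proof is just conjugation, Lemma~\ref{l:stabilizers}.1, plus transitivity) yields $\s=\t_0$. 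Every step is sound: $q$ is continuous for the quotient topology $\s/H$ by definition, and ``$G_a \in \s$'' in Lemma~\ref{l:stabilizers}.4 means precisely $\s$-openness of the stabilizer. What your argument buys is brevity and independence from Lemmas~\ref{l:orbit}--\ref{l:equalities}; there is no circularity, since Lemma~\ref{l:stabilizers} is established before any of the compactification results. What the paper's route buys is uniformity and robustness: the two-element classification $\{\t_0/H,\ \t_{\T}/H\}$ is needed anyway for the remaining cases in the proof of Theorem~\ref{t:counterexample} (those with $\s/H=\t_{\T}/H$), so the corollary comes essentially for free once that machinery is in place; moreover, in the generalization of Theorem~\ref{t:irrat}, where $H=G_u$ for irrational $u$ and the coset space $G/H=J$ carries a non-discrete original topology, your singleton-preimage trick is unavailable, whereas the classification-based argument adapts.
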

\begin{proof}
	By the proof of Lemma~\ref{l:CosetTopologies}, there exists
	\(\gamma\in\Sigma_H(G)\) such that
	\[
	\gamma\subseteq\sigma,\qquad \gamma/H=\sigma/H.
	\]
Since \(\sigma/H=\tau_0/H\), we have $\s/H \neq \t_{\T}/H$. Hence, Lemma~\ref{l:equalities}.1 
implies that the corresponding
compactification is not the circle compactification \(f_1\). Hence, by Lemma~\ref{l:equalities}.2, \(\gamma=\tau_0\). Therefore
	\[
	\tau_0=\gamma\subseteq\sigma\subseteq\tau_0,
	\]
	and so \(\sigma=\tau_0\).

\end{proof} 

%
%
%
%

%
%
%

The following useful result is known as 
\textit{Merson's Lemma}. 	

\begin{fact} \label{f:Merson} (Merson's Lemma \cite[Lemma 7.2.3]{DPS89} or \cite[Lemma 4.4]{DM14}) 
	
	Let $(G,\gamma)$ be a (not necessarily Hausdorff) topological group and $H$ be a subgroup of $G.$ If 
	$\gamma_1\subseteq \gamma$ is a coarser group topology on $G$ 
	such that $\gamma_1|_{H}=\gamma|_{H}$ and $\gamma_1/H=\gamma/H$, then $\gamma_1=\gamma.$
\end{fact}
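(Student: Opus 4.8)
The plan is to prove $\gamma=\gamma_1$ by establishing the single nontrivial inclusion $\gamma\subseteq\gamma_1$; since $\gamma_1\subseteq\gamma$ is assumed, it suffices to verify that every $U\in N_e(G,\gamma)$ is also a $\gamma_1$-neighborhood of $e$, i.e. that it contains a $\gamma_1$-open set containing $e$. So I would fix such a $U$ and, using continuity of multiplication in $(G,\gamma)$, choose a symmetric $V\in N_e(G,\gamma)$ with $V^2\subseteq U$. The whole argument is then local at $e$ and, I note, never uses Hausdorffness, matching the generality of the statement.

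Next I would translate the two hypotheses into usable form. First, the restriction hypothesis $\gamma_1|_H=\gamma|_H$ makes $V\cap H$ a $\gamma_1|_H$-neighborhood of $e$ in $H$, so there is a symmetric $\gamma_1$-open $W_1\ni e$ with $W_1\cap H\subseteq V$; shrinking once more, pick a symmetric $\gamma_1$-open $W\ni e$ with $W^2\subseteq W_1$. Second, the quotient hypothesis $\gamma_1/H=\gamma/H$ converts $H$-saturated $\gamma$-open sets into $\gamma_1$-open sets: for any $\gamma$-open $A$ the image $q(A)$ is $(\gamma/H)$-open because the projection $q\colon G\to G/H$ is $\gamma$-open, hence it is $(\gamma_1/H)$-open, and therefore $q^{-1}(q(A))=AH$ is $\gamma_1$-open by $\gamma_1$-continuity of $q$.

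The key device I would use is to combine $V$ with $W$ before applying the quotient observation. Set $V_0:=V\cap W$, a symmetric $\gamma$-open neighborhood of $e$ with $V_0\subseteq V$ and $V_0\subseteq W$. By the quotient observation $V_0H$ is $\gamma_1$-open, so $O:=V_0H\cap W$ is a $\gamma_1$-open neighborhood of $e$, and I claim $O\subseteq U$. Indeed, given $g\in O$, write $g=v_0h$ with $v_0\in V_0$ and $h\in H$; then $h=v_0^{-1}g$, where $v_0^{-1}\in V_0\subseteq W$ and $g\in O\subseteq W$, so $h\in W^2\subseteq W_1$. Since $h\in H$ this gives $h\in W_1\cap H\subseteq V$, whence $g=v_0h\in V_0\cdot V\subseteq V^2\subseteq U$. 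Thus $O\subseteq U$, which shows $U\in N_e(G,\gamma_1)$ and completes the inclusion $\gamma\subseteq\gamma_1$.

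The only real obstacle is controlling the $H$-component $h$ in the decomposition $g=v_0h$: the representative produced by the quotient condition is a priori only $\gamma$-bounded (it lies in $V$), which is too weak to feed into the restriction condition. The trick that resolves this is to run the quotient argument with the smaller set $V_0=V\cap W$ rather than with $V$ itself — legitimate precisely because the quotient observation needs only $\gamma$-openness, which $V\cap W$ retains even though it is not $\gamma_1$-open. This forces the chosen representative $v_0$ to be $\gamma_1$-small (inside $W$), so that $h=v_0^{-1}g$ becomes a product of two $\gamma_1$-small elements, lands in $W_1\cap H\subseteq V$, and lets the restriction hypothesis do its job.
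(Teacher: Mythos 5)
Your proof is correct and takes essentially the same route as the source the paper relies on: the paper does not prove this Fact itself but quotes it from \cite{DPS89} and \cite{DM14}, and your chain of choices ($V$ symmetric with $V^2\subseteq U$, then $W_1$ with $W_1\cap H\subseteq V$ via the restriction hypothesis, then $W$ with $W^2\subseteq W_1$, then the observation that $(V\cap W)H$ is $\gamma_1$-open and $(V\cap W)H\cap W\subseteq U$) is precisely the standard proof of Merson's Lemma given there. One cosmetic repair: take $V$ to be $\gamma$-\emph{open} (always possible by symmetrizing an open set), since your quotient observation is applied to $V_0=V\cap W$ and requires $\gamma$-openness, whereas you introduced $V$ only as a member of $N_e(G,\gamma)$.
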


\begin{definition} \label{d:Mk} 
Let $\nu=[a_1, \cdots, a_n]$ (with $n \geq 3$) be a cycle in  $\Q_0$. 
For every $k \in \{1, \cdots, n\}$ modulo $n,$ 
define the following subset of $G$: 
$$
M_k:=\{g \in G: \ 
g(x)=x \ \ \ \forall \ x \notin (a_{k-1}, a_{k+1})\}.
$$
\end{definition}

Then $M_k(y)=\{g(y): g \in M_k\}=(a_{k-1}, a_{k+1})$ 
for every $y \in (a_{k-1}, a_{k+1})$ by 
the c-ultrahomogeneity of the action of $G$ on $\Q_0$ 
(compare Lemma~\ref{l:stabilizers}.7, with
\(M_k=M_{[a_{k-1},a_{k+1}]}\)).  


\begin{lemma} \label{l:u1} 
	Let $\s \in  \mathcal{T}_{\downarrow}(G,\t_0)$ with $\s \neq \t_0.$ Suppose that $U \in N_e(G,\s), U^{-1}=U$ and  $\nu=[a_1, \cdots, a_n]$ is a cycle 
	in $\Q_0$ such that $n > 7$ and 
		\begin{equation} \label{eq:u1} 
			G_{a_1, \cdots, a_n} \subseteq U.   
		\end{equation}
	Then $M_k \subseteq U^4$ for every $k \in \{1, \cdots, n\}$ (modulo $n$).  
\end{lemma}
\begin{proof}  Fix $k \in \{1, \cdots, n\}$. 
		Let $f \in M_k$. We have to show that $f \in U^4$.  
	One may assume that $f(a_k) \neq a_k$ (otherwise, $f \in G_{a_1, \cdots, a_n}$, hence, $f \in U$ by Equation \ref{eq:u1}).  
	Since $[a_{k-1}, a_k, a_{k+1}]$ and $f \in M_k \subset  \Aut(\Q_0),$ 
	we have 
	$$[f(a_{k-1}), f(a_k), f(a_{k+1})]=[a_{k-1}, f(a_k), a_{k+1}].$$
	 That is, $f(a_k) \in (a_{k-1},a_{k+1})$.  
	Without loss of generality, assume that $f(a_k) \in  (a_{k},a_{k+1})$ (the case of $f(a_k) \in (a_{k-1},a_{k})$ is very similar). 
\sk 	
	
	Since $n>7$, we can assume below that the elements $a_{k-3}, a_{k-2}, a_{k-1}, a_k, a_{k+1}, a_{k+2}, a_{k+3}$ are different. 
	By Lemma \ref{l:minimum}, the compact-open topology $\t_{\T}$ 
	(coming from $\T$) is the least element in $\mathcal{T}_{\downarrow}(G,\t_0)$. Therefore, $\t_{\T} \subseteq \s$. There exists $V \in N_e(G,\t_{\T})$ such that $V^{-1}=V$ and 
	\begin{equation} \label{eq:great} 
	v(a_i)\in (a_{i-1},a_{i+1})
	\quad\text{and}\quad
	v(f(a_k))\in (a_k,a_{k+1}) 
	\end{equation}
	for every $v \in V$ and every $i  \in \{1, \cdots, n\}$ (cyclically modulo $n$). 
	Clearly, $G_{a_1, \cdots, a_n} \subseteq V$.
	Define $W:=U \cap V$. Then still $W \in N_e(G,\s)$ and also  
	\begin{equation} \label{eq:u3} 
		G_{a_1, \cdots, a_n} \subseteq W.   
	\end{equation}
	
	Choose $g \in W$ such that $g(a_k) \neq a_k$. 
	Such an element exists; otherwise, we would have $W \subseteq G_{a_k}$, which would imply $\sigma=\tau_0$ by Lemma \ref{l:stabilizers}.  
	 By our choice of $W \subseteq V$, we have  
	$g(a_i) \in (a_{i-1},a_{i+1})$ for every $i$ modulo $n$. In particular, $g(a_{k-1}) \in (a_{k-2},a_{k})$ and $g(a_k) \in (a_{k-1},a_{k+1})$. Observe also that since $g \in \Aut(\Q_0)$, it preserves the cyclic order. Hence, $g(\nu)=[g(a_1), \cdots, g(a_n)]$ is also a cycle in $\Q_0$. 
	Since $a_k \neq g(a_k) \in (a_{k-1},a_{k+1})$, there are two possibilities: $g(a_k) \in (a_{k-1},a_{k})$ or $g(a_k) \in (a_{k},a_{k+1})$. 
	We can assume (up to replacing $g$ by $g^{-1}$) that $g(a_k) \in (a_{k-1},a_{k})$ 
	(Indeed, if 
	$g(a_k) \in (a_{k},a_{k+1})$ then $[g(a_{k-1}),a_k, g(a_k)]$. 
	Therefore, $[a_{k-1}, g^{-1}(a_k), a_k]$ and $g^{-1} \in W$).

	
\sk 
	 We have the following cycle  
	 $[a_{k-1}, g(a_{k}), a_k, f(a_k), a_{k+1}]$. By multiplying on $g^{-1},$ we obtain the cycle $[g^{-1}(a_{k-1}), a_{k}, g^{-1}(a_{k}), g^{-1}(f(a_k)), g^{-1}(a_{k+1})]$.
	 
	 By Equation \ref{eq:great}, $g^{-1}(a_{k-1}) \in (a_{k-2},a_k)$.  
	 Define $s \in \Q_0$ as the maximum between $g^{-1}(a_{k-1})$ and $a_{k-1}$ in the natural linear order of $[a_{k-2},a_k]$ where $a_{k-2}$ is the least element (see Remark \ref{r:cech}.2). 
	  In other words, $s=g^{-1}(a_{k-1})$ if $g^{-1}(a_{k-1}) \in [a_{k-1},a_k)$ and $s=a_{k-1}$ if $g^{-1}(a_{k-1}) \in (a_{k-2},a_{k-1}]$. 
	 
	 By \ref{eq:great}, we have $g^{-1}(f(a_k)) \in (a_k,a_{k+1})$. This implies that $a_{k+1} \in  (g^{-1}(f(a_k)), a_{k+2})$. 
	 Again, by \ref{eq:great}, $g(a_{k+2}) \in (a_{k+1},a_{k+3})$. It follows that $[f(a_k), a_{k+1}, g(a_{k+2})].$ Thus, 
	 $[g^{-1}(f(a_k)), g^{-1}(a_{k+1}), a_{k+2}].$
	 Therefore,  $g^{-1}(a_{k+1})$ belongs to the same interval $(g^{-1}(f(a_k)), a_{k+2})$ as does $a_{k+1}$. 
	 
	 Define $t \in \Q_0$ as the minimum between $g^{-1}(a_{k+1})$ and $a_{k+1}$ in the natural linear order of the interval $(g^{-1}(f(a_{k})),a_{k+2})$. In other words, $t=g^{-1}(a_{k+1})$ if $a_{k+1} \in [g^{-1}(a_{k+1}),a_{k+2})$ and 
	 $t=a_{k+1}$ if $g^{-1}(a_{k+1}) \in (a_{k+1}, a_{k+2}]$. Note that 
	 \begin{equation} \label{eq:notin} 
	 	a_i \notin (s,t), \ \ \ g^{-1}(a_i) \notin (s,t) \ \ \ \ \forall  i \neq k 
	 \end{equation} 
	 and, on the other hand, by the construction, $g^{-1}(f(a_k))$ and $g^{-1}(a_k)$ both belong to the interval $(s,t)$. 
	 By c-ultrahomogeneity (Lemma \ref{l:stabilizers}) one may   
	 choose $\varphi \in G$ such that 
	 \[
	 \varphi\bigl(g^{-1}(f(a_k))\bigr)=g^{-1}(a_k),
	 \qquad
	 \varphi(a_k)=a_k,
	 \]
and $\varphi(x)=x$ for every $x \notin (s, t)$. Therefore, $\varphi(a_i)=a_i$ for every $i \in \{1, \cdots, n\}$. Then, $\varphi \in G_{a_1, \cdots, a_n}$.

	  Clearly, 
	 $
	 (g \varphi g^{-1}f)(a_k)=a_k.
	 $ 
	 Using Equation \ref{eq:notin} we have 
	 $(g \varphi g^{-1}f)(a_i)=a_i$ for every $i \neq k$.  This means that $g \varphi g^{-1}f \in G_{a_1, \cdots, a_n} \subseteq  W$,    
	 which implies that   
	 \[
	 f\in g\varphi^{-1}g^{-1}G_{a_1,\ldots,a_n}
	 \subseteq W\cdot G_{a_1,\ldots,a_n}\cdot W\cdot G_{a_1,\ldots,a_n}
	 \subseteq W^4\subseteq U^4.
	 \]
	 
\end{proof}

\begin{lemma} \label{l:NAwithCO}  
	It is impossible to find $\s \in  \mathcal{T}_{\downarrow}(G,\t_0)$ such that 
	$$
	\s|H=\t_0|H \ \wedge \ \s/H=\t_{\T}/H.
	$$
\end{lemma}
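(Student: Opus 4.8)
The plan is to argue by contradiction. Suppose some $\s\in\mathcal{T}_{\downarrow}(G,\t_0)$ satisfies $\s|H=\t_0|H$ together with $\s/H=\t_{\T}/H$. Since $\t_{\T}/H$ is the circle topology, it is not discrete, so $\s/H\neq\t_0/H$ and hence $\s\neq\t_0$. By Lemma~\ref{l:minimum} the minimum topology $\t_{\T}$ lies below every element of $\mathcal{T}_{\downarrow}(G,\t_0)$, so $\t_{\T}\subseteq\s\subseteq\t_0$. My aim is to contradict $\s|H=\t_0|H$ by exhibiting a basic open subgroup $G_{a_0,b}=H\cap G_b$ of $(H,\t_0|H)$ (with $b\neq a_0$) that fails to be a $\s|H$-neighbourhood of $e$. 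Equivalently, assuming $G_{a_0,b}$ \emph{is} $\s$-open in $H$, I fix a symmetric $U\in N_e(G,\s)$ with $U\cap H\subseteq G_{a_0,b}$ and will derive a contradiction; note that, $G_{a_0,b}$ being a subgroup, this automatically gives $(U\cap H)^{m}\subseteq G_{a_0,b}$ for every $m$.

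The engine is Lemma~\ref{l:u1}. Since $\s\subseteq\t_0$, the neighbourhood $U$ contains a basic stabilizer $G_F$; enlarging $F$ so that $a_0\in F$ and organizing $F$ into a cycle $\nu=[a_1,\dots,a_n]$, I obtain $G_\nu\subseteq U$ with $G_\nu\subseteq G_{a_0}=H$. I arrange the cycle (inserting extra vertices to separate $a_0$ from $b$) so that $b$ sits inside one interval $(a_{k-1},a_{k+1})$ while $a_0\notin(a_{k-1},a_{k+1})$. Then $M_k=M_{[a_{k-1},a_{k+1}]}\subseteq G_{a_0}=H$, and since $M_k$ acts ultrahomogeneously on $(a_{k-1},a_{k+1})$ it contains an element $h$ with $h(b)\neq b$; in particular $h\in H\setminus G_{a_0,b}$. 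Lemma~\ref{l:u1} now places the whole subgroup in the ambient fourth power, $M_k\subseteq U^4$, so $h\in(U^4\cap H)\setminus G_{a_0,b}$.

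The main obstacle is exactly this factor of four: Lemma~\ref{l:u1} controls $U^4$, whereas the hypothesis $\s|H=\t_0|H$ controls only $U\cap H$, and in a non-archimedean group $U^4\cap H$ is genuinely larger than $(U\cap H)^4$, so the two facts do not collide directly. The step I expect to require the most care is bridging this gap, and this is where both hypotheses must be used simultaneously. Concretely, I would revisit the construction inside the proof of Lemma~\ref{l:u1}: the witnessing element is produced as $h=g\varphi^{-1}g^{-1}\psi$ with $\varphi,\psi\in G_\nu\subseteq H$ (hence $\varphi,\psi\in U\cap H\subseteq G_{a_0,b}$), the only factor possibly leaving $H$ being the single auxiliary $g\in U$, which displaces $a_0$. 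Because $h,\varphi,\psi\in H$, the conjugate $g\varphi^{-1}g^{-1}=h\psi^{-1}$ already lies in $H$; the task is to keep this conjugate inside $U\cap H$. Here I would invoke $\s/H=\t_{\T}/H$: by continuity of the quotient $q\colon(G,\s)\to(\Q_0,\t_{\T}/H)$ the auxiliary $g$ can be chosen to move $a_0$ arbitrarily little, i.e.\ arbitrarily close to $H$, while $\s|H=\t_0|H$ lets me fix the remaining vertices. Feeding this refined choice back into the construction should realize $h$ as a product of elements of $U\cap H\subseteq G_{a_0,b}$, forcing $h\in G_{a_0,b}$ and contradicting $h(b)\neq b$. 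Thus no such $\s$ can exist. The delicate heart of the argument is precisely this coupling — using $\s|H=\t_0|H$ to trap the ``interval'' factors in $G_{a_0,b}$ and $\s/H=\t_{\T}/H$ to trap the one stray conjugating factor near $H$ — so that the fourth-power estimate of Lemma~\ref{l:u1} descends to an estimate internal to $H$.
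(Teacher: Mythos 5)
Your skeleton is the paper's: deduce $\s\neq \t_0$ from $\s/H=\t_{\T}/H$, extract a cycle $\nu$ through $a_0$ with $G_{\nu}\subseteq U$, and use Lemma \ref{l:u1} to place an interval subgroup inside a small power of $U$, so that an element fixing $a_0$ but moving $b$ lands there; your witness $h\in M_k$ (supported in an interval around $b$, away from $a_0$) is in fact slightly more economical than the paper's witness $mw\in W^5$ (where $w\in W$ moves $b$ and $m\in M_0\subseteq W^4$ restores $a_0$). However, the proposal does not close, and you say so yourself. Having fixed a single symmetric $U$ with only $U\cap H\subseteq G_{a_0,b}$, you obtain $h\in (U^4\cap H)\setminus G_{a_0,b}$, which is not yet a contradiction; the bridge you then propose --- re-opening the proof of Lemma \ref{l:u1}, writing $h=g\varphi^{-1}g^{-1}\psi$, and using $\s/H=\t_{\T}/H$ to pick the conjugating factor $g$ ``close to $H$'' so as to force $g\varphi^{-1}g^{-1}\in U\cap H$ --- is not carried out and is not justified. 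Closeness of $ga_0$ to $a_0$ in the quotient gives no control of the conjugate $g\varphi^{-1}g^{-1}$ in $(G,\s)$: the element $\varphi$ is chosen \emph{after} $g$ (it depends on $g$ and $h$), and conjugation is not uniformly continuous, so ``should realize $h$ as a product of elements of $U\cap H$'' remains a hope, not a proof. This is a genuine gap, located exactly at what you call the delicate heart of the argument.

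The gap has a one-line repair, and it is precisely how the paper argues. The hypothesis $\s|H=\t_0|H$ is a statement about the topology $\s|H$, not about one pre-assigned neighborhood, so $U$ may be chosen adaptively: since $G_{a_0,b}=G_b\cap H$ is a $\t_0|H$-neighborhood of $e$ in $H$, there is $U_0\in N_e(G,\s)$ with $U_0\cap H\subseteq G_b\cap H$, and by continuity of multiplication in $(G,\s)$ there is a symmetric $U\in N_e(G,\s)$ with $U^5\subseteq U_0$; hence $U^5\cap H\subseteq G_b\cap H$, which is exactly Equation \ref{eq:H} of the paper. Running your own construction with this $U$, Lemma \ref{l:u1} gives $M_k\subseteq U^4$, so your $h$ lies in $U^4\cap H\subseteq G_b\cap H$ while $h(b)\neq b$ --- an immediate contradiction. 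In other words, your ``main obstacle'' (that $U^4\cap H$ may exceed $(U\cap H)^4$) is a phantom: one never needs $(U\cap H)^4$, only $U^4\cap H\subseteq G_{a_0,b}$, and that is secured by shrinking $U$ \emph{before} invoking Lemma \ref{l:u1}. With this correction your argument is complete and essentially coincides with the paper's proof (both use $\s/H=\t_{\T}/H$ only to guarantee $\s\neq\t_0$, which is what Lemma \ref{l:u1} and Lemma \ref{l:stabilizers}.4 require).
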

\begin{proof} As before, $H:=G_{a_0}$, where $a_0 \in \Q_0$.  Choose any $b \neq a_0$ in $\Q_0$. 
	Since $\s|H=\t_0|H$ and $G_{a_0, b} \in N_e(G,\t_0)$,  there exists $U \in N_e(G,\s)$ such that $U=U^{-1}$ and 
		\begin{equation} \label{eq:H} 
	U^5 \cap H \subseteq G_{b} \cap H.
	\end{equation}  
	Since \(\sigma/H=\tau_{\mathbb T}/H\neq \tau_0/H\), we have
	\(\sigma\neq\tau_0\). Since \(\sigma\subseteq\tau_0\), the neighbourhood
	\(U\) is \(\tau_0\)-open. Hence there exists a sufficiently long cycle
	\(\nu=[a_1,\ldots,a_n]\) in \(\mathbb Q_0\) such that
	\[
	G_{a_1,\ldots,a_n}\subseteq U.
	\]
	By refining the cycle if necessary, we may suppose that $n>7$, 
	\[
	a_0\in (a_n,a_1)
	\quad\text{and}\quad
	b\notin (a_n,a_1).
	\]
	
	Again, by Lemma~\ref{l:minimum}, \(\tau_{\mathbb T}\subseteq\sigma\).
	Choose a symmetric \(V\in N_e(G,\tau_{\mathbb T})\) such that
	\[
	v(a_i)\in(a_{i-1},a_{i+1})\quad (i=1,\ldots,n)
	\]
	and
	\[
	v(a_0)\in(a_n,a_1)
	\]
	for every \(v\in V\). Clearly, \(G_{a_1,\ldots,a_n}\subseteq V\). Put
	\(W:=U\cap V\). Then \(W\in N_e(G,\sigma)\), \(W=W^{-1}\), and 
		\begin{equation} \label{eq:H2} 
		W^5 \cap H \subseteq G_{b} \cap H \ \ \ \wedge \ \ \ 
		G_{a_1, \cdots, a_n} \subseteq W. 
	\end{equation}

	Since \(\sigma\neq\tau_0\), Lemma~\ref{l:stabilizers}.4 implies that
	\(G_b\notin\sigma\). Therefore \(W\) is not contained in \(G_b\). Choose
	\(w\in W\) such that \(w(b)\neq b\). Then \(w(a_0)\neq a_0\). Indeed, if
	\(w(a_0)=a_0\), then 
	$$w\in W\cap H\subseteq U^5\cap H\subseteq G_b\cap H,$$ 
	contrary to \(w(b)\neq b\). 
	Define
	\[
	M_0:=\{g\in G:g(x)=x \ \forall x\notin(a_n,a_1)\}.
	\]
	By the choice of \(V\), we have \(w(a_0)\in(a_n,a_1)\). Hence, by circular
	ultrahomogeneity inside the interval \((a_n,a_1)\), there exists
	\(m\in M_0\) such that \(m(w(a_0))=a_0\). Since \(b\notin(a_n,a_1)\), we have
	\[
	(mw)(b)=w(b)\neq b.
	\]
	Thus \(mw\in H\) and \(mw\notin G_b\).
	
	Finally, \(M_0\subseteq M_1\), where \(M_1\) is the subgroup from
	Definition~\ref{d:Mk} corresponding to the cycle \(\nu\). Therefore
	Lemma~\ref{l:u1} gives
	\[
	M_0\subseteq M_1\subseteq W^4.
	\] 
Hence, $mw \in W^5 \cap H$. On the other hand, 
$mw \notin G_{b} \cap H$. This contradiction (compare Equation \ref{eq:H2}) completes the proof.  
  
\end{proof}

\begin{lemma} \label{l:BOTHco} 
	Let $\s_1, \s_2 \in \mathcal{T}_{\downarrow}(G,\t_0)$ such that 
	$$
	\s_1|H=\s_2|H \ \wedge \ \s_1/H=\s_2/H=\t_{\T}/H.
	$$
	Then $\s_1=\s_2$. 
\end{lemma}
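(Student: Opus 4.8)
The plan is to reduce the statement to Merson's Lemma (Fact \ref{f:Merson}) by passing to the supremum topology. I set $\g:=\sup\{\s_1,\s_2\}$, which again belongs to $\mathcal{T}_{\downarrow}(G,\t_0)$. Using the local base (\ref{eq:sup-top}) of a supremum together with the morphism identity $r_H(\sup\{\s_1,\s_2\})=\sup\{r_H(\s_1),r_H(\s_2)\}$ recorded just before Definition \ref{d:inj-key}, I obtain $\g|_H=\sup\{\s_1|_H,\s_2|_H\}=\s_1|_H=\s_2|_H$, since the two restrictions coincide by hypothesis. If I can also establish the coset equality $\g/H=\t_{\T}/H=\s_1/H=\s_2/H$, then for each comparable pair $\s_i\subseteq\g$ Merson's Lemma applies (with $\s_i|_H=\g|_H$ and $\s_i/H=\g/H$) and gives $\g=\s_1$ and $\g=\s_2$, whence $\s_1=\s_2$, as wanted. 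So the entire proof hinges on the single equality $\g/H=\t_{\T}/H$.

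One inclusion is immediate: from $\s_i\subseteq\g$ we get $\t_{\T}/H=\s_i/H\subseteq\g/H$. For the reverse I would invoke the dichotomy of Lemma \ref{l:equalities}: since $\g\in\mathcal{T}_{\downarrow}(G,\t_0)$, its coset topology is forced to be one of only two possibilities, $\g/H\in\{\t_0/H,\t_{\T}/H\}$. Because $\g/H\supseteq\t_{\T}/H$, it then suffices to \emph{exclude the discrete alternative} $\g/H=\t_0/H$; equivalently, by Corollary \ref{c:happy}, to exclude $\g=\t_0$.

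The hard part will be exactly this exclusion, and I would argue it contrapositively. If $\g=\t_0$, then $\g|_H=\t_0|_H$ is the full (Polish, non-archimedean) topology on $H$, and hence so is $\s_1|_H=\g|_H$; yet $\s_1/H=\t_{\T}/H$ is non-discrete, i.e. $H$ is not $\s_1$-open. Thus I need the decisive incompatibility: \emph{no} topology $\s\in\mathcal{T}_{\downarrow}(G,\t_0)$ can simultaneously induce the full topology $\t_0|_H$ on $H$ and a non-discrete coset topology on $G/H$ (this is essentially the assertion that $H$ is a key subgroup). This is where the geometry of $G=\Aut(\Q_0)$ must enter, and where a purely formal argument fails — a naive ``product'' topology witnessing the contrary is not conjugation-invariant, because fixing a finite set avoiding $a_0$ is not stable under conjugation by elements moving $a_0$. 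Concretely, I would exploit that $\s|_H=\t_0|_H$ propagates, via conjugation (Lemma \ref{l:stabilizers}.1) and transitivity, to $\s|_{G_{a'}}=\t_0|_{G_{a'}}$ on \emph{every} point-stabilizer, and then combine this with $\t_{\T}\subseteq\s$ and the c-ultrahomogeneity used in Lemmas \ref{l:orbit} and \ref{l:maximality} to force $G_{a_0}$ to be $\s$-open, contradicting $\s/H=\t_{\T}/H$ through Lemma \ref{l:stabilizers}.4. Once this incompatibility is secured, $\s_1|_H\subsetneq\t_0|_H$, so $\g|_H=\s_1|_H\neq\t_0|_H$, giving $\g\neq\t_0$ and completing the reduction that makes Merson's Lemma applicable.
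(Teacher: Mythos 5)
Your reduction is exactly the one the paper uses: pass to $\g:=\sup\{\s_1,\s_2\}$, note that $\g|_H=\s_1|_H=\s_2|_H$ and $\t_{\T}/H\subseteq \g/H$, invoke the dichotomy $\g/H\in\{\t_0/H,\ \t_{\T}/H\}$ from Lemma \ref{l:equalities}, settle the case $\g/H=\t_{\T}/H$ by applying Merson's Lemma (Fact \ref{f:Merson}) to each comparable pair $\s_i\subseteq\g$, and reduce the remaining case, via Corollary \ref{c:happy}, to excluding $\g=\t_0$. All of that is correct and coincides with the paper's strategy.

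The gap is the exclusion of $\g=\t_0$, which you rightly call the hard part but do not prove. The statement you need --- that no $\s\in\mathcal{T}_{\downarrow}(G,\t_0)$ can satisfy $\s|_H=\t_0|_H$ together with $\s/H=\t_{\T}/H$ --- is precisely the paper's Lemma \ref{l:NAwithCO}, and its proof is not soft: it rests on Lemma \ref{l:u1} (if a symmetric $U\in N_e(G,\s)$ contains some $G_{a_1,\cdots,a_n}$, then each subgroup $M_k$ lies in $U^4$) and on constructing, from a $w\in W$ moving both $b$ and $a_0$ and an $m\in M_0\subseteq W^4$ returning $a_0$, an element $mw\in W^5\cap H$ that moves $b$, contradicting $W^5\cap H\subseteq G_b\cap H$. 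Your sketch --- propagate $\s|_{G_a}=\t_0|_{G_a}$ to all stabilizers by conjugation (true, but cheap) and then ``combine with c-ultrahomogeneity to force $G_{a_0}$ to be $\s$-open'' --- supplies no mechanism for that forcing; note that the paper never shows $H$ becomes $\s$-open, but instead refutes the trace hypothesis $\s|_H=\t_0|_H$ directly by the quantitative argument above. So the decisive step of your proposal is an assertion, not an argument. To your credit, you have localized the difficulty at exactly the right spot: the paper's own one-line dismissal of this case (``this contradicts $\s|H=\t_{\T}|H$'') appeals to a condition that is not among the hypotheses, and the clean way to close the case is the (forward) appeal to Lemma \ref{l:NAwithCO}; but in a self-contained proof, that lemma's content must actually be established, and your proposal does not do so.
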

\begin{proof} 
	Put \(\sigma:=\sup\{\sigma_1,\sigma_2\}\). Then
	\(\sigma\in T_{\downarrow}(G,\tau_0)\), \(\sigma_1\subseteq\sigma\),
	\(\sigma_2\subseteq\sigma\), and, since
	\(\sigma_1|_H=\sigma_2|_H\), Equation~\((2.1)\) gives
	\[
	\sigma|_H=\sigma_1|_H=\sigma_2|_H.
	\]
	Also,
	\[
	\tau_{\mathbb T}/H=\sigma_1/H=\sigma_2/H\subseteq \sigma/H.
	\]
	By Lemma~\ref{l:equalities}, there are only two possibilities.
	
	If \(\sigma/H=\tau_{\mathbb T}/H\), then Merson's Lemma \ref{f:Merson} applied to
	\(\sigma_1\subseteq\sigma\) gives \(\sigma_1=\sigma\). Similarly,
	\(\sigma_2=\sigma\). Hence \(\sigma_1=\sigma_2\).
	
	If \(\sigma/H=\tau_0/H\), then Corollary~\ref{c:happy} gives
	\(\sigma=\tau_0\). Hence
	\[
	\sigma_1|_H=\sigma|_H=\tau_0|_H.
	\]
	Together with \(\sigma_1/H=\tau_{\mathbb T}/H\), this contradicts
	Lemma~\ref{l:NAwithCO}. Therefore this second case cannot occur.
	
\end{proof}

\sk 
Now, we can complete the \textbf{proof of Theorem} \ref{t:counterexample}.
\begin{proof} By Lemma \ref{l:HisnotCo-min} 
 it is enough to check that $H$ is inj-key in $G$.  
Let $\s_1, \s_2 \in \mathcal{T}_{\downarrow}(G,\t_0)$ such that $\s_1|H=\s_2|H$. We have to show that $\s_1=\s_2$. 
By Lemma \ref{l:equalities} we have only two admissible coset topologies. Therefore, we have the following three possibilities:
\begin{enumerate}
	\item $\s_1 / H=\s_2/H=\t_0/H$.  
	\item $\s_1/H=\s_2/H=\t_{\T}/H$.  
	\item 
	 $\s_1/ H=\t_0/H \wedge \s_2/H=\t_{\T}/H$ 
\end{enumerate}

\sk 
For (1), use Corollary \ref{c:happy} which ensures that $\s_1=\s_2=\t_0$. 

For (2), use Lemma \ref{l:BOTHco} to conclude directly $\s_1=\s_2$. 
 
For (3),  
Corollary \ref{c:happy} guarantees that $\s_1=\t_0$. 
Then our assumption $\s_1|H=\s_2|H$ implies that $\s_2|H=\t_0|H$. However, Lemma \ref{l:NAwithCO} tells us that such $\s_2$ does not exist. 

\end{proof} 

\begin{question} \label{q:BPestov} 
	Finally, back to Pestov's Question \ref{q:Pest}, it would be interesting to present concrete examples (if they exist) of Polish groups with metrizable $M(G)=\widehat{G/H}$ such that the corresponding extremely amenable subgroup $H$ is not inj-key in $G$.  	
\end{question}

\sk 
\section{Other directions and questions} \label{s:Q} 

It seems attractive to look for additional natural examples, coming from different sources, which respond to Problem~\ref{p:main}, not necessarily to Question~\ref{q:Pest}.

\subsection*[The Polish group Aut(Q,<=)]{The Polish group \(G:=\Aut(\mathbb Q,\leq)\)}

The linearly ordered rational case suggests a closely related example. 
It is expected, and the same method should establish, that for every $a\in\mathbb{Q}$, the stabilizer $H:=G_a$ is inj-key but not co-minimal 
in the Polish group \(G:=\Aut(\mathbb Q,\leq)\). The relevant greatest \(G\)-compactification
\(\beta_G(G/G_a)\) is a linearly ordered metrizable \(G\)-flow; it can be
described as the compact interval \([0,1]\) in which every internal rational
point \(q\in\mathbb Q\cap(0,1)\) is replaced by an ordered triple
\(q^-,q,q^+\). See \cite[Remark 3.8]{Me-OrdSem} and also
\cite[Example 5.4(b)]{Me-MaxEqComp}.

	This $G$-compactification \(\beta_G(G/G_a)\) has six \(G\)-orbits: the middle rational orbit
	\(\mathbb Q\cap(0,1)\), the two twin rational orbits
	\((\mathbb Q\cap(0,1))^-\) and \((\mathbb Q\cap(0,1))^+\), the irrational
	orbit \(J=[0,1]\setminus\mathbb Q\), and the two endpoint orbits
	\(\{0\}\) and \(\{1\}\). 
	

\sk 
Let \(H\) be a closed co-compact subgroup of a Polish group \(G\). Then \(H\)
is Raikov complete, and by Fact~\ref{p:inj-key=co-min for central}.3 it is a key subgroup of \(G\).
In particular, this applies to point stabilizers in transitive actions of
Polish homeomorphism groups $G=\Homeo(K)$ on compact homogeneous spaces $K$, as in
Example~\ref{ex:Effros}. 
In view of Problem \ref{p:main} it is interesting to verify
whether 
$H$ might be even inj-key but not co-minimal. For such a subgroup to fail co-minimality, it is of course necessary that
the ambient group $\Homeo(K)$  is not minimal. 

\sk  
\subsection*{J. van Mill's result for Menger's continuum} 
By a result of J. van Mill \cite{Mill07}, the Polish homeomorphism group
\(\Homeo(K)\) of a homogeneous metrizable continuum \(K\) need not be minimal;
this answered a question of L. Stoyanov. In particular, one may take
\(K=\mu_n\), the \(n\)-dimensional Menger continuum, for every \(n\geq 1\).
 This leads to the following 

\begin{question} \label{q:Mill} 
Is the (key) subgroup $H:=\operatorname{St}(x_0)$ of $G:=\Homeo(\mu_n)$ inj-key but not co-minimal for the Menger continuum $\mu_n$? 
\end{question}

\sk 
\subsection*{The homeomorphism group of the 2-cell}

Another natural candidate is
$
G:=\Homeo([0,1]^2)
$
and the stabilizer \(H:=\operatorname{St}(p)\), where \(p \in\partial([0,1]^2)\).
The group \(G\) acts transitively on the boundary, and hence \(G/H\) identifies
with the boundary circle. Thus \(H\) is a key subgroup.

By results of D. Gamarnik \cite{Gam}, the Polish group
\(\Homeo([0,1]\times[0,1])\) is not minimal. Moreover, according to
\cite[page 10]{Gam}, there exists a strictly coarser Hausdorff group topology
on \(G\) in which \(H\) is dense. Therefore \(H\) is not co-minimal. 
It is therefore natural to ask whether this subgroup $H$ is inj-key in $G$.



%
%



\sk 
\subsection*{Epimorphic embeddings of topological groups}

It may also be useful to examine embeddings
$
i\colon H\hookrightarrow G
$
which are epimorphisms in the category of Hausdorff topological groups.
Every dense embedding is an epimorphism, but dense subgroups are always
co-minimal. On the other hand, Uspenskij proved \cite{Usp-epic} that
epimorphisms in the category of Hausdorff topological groups need not have
dense range.

This raises the question whether non-dense epimorphic embeddings can produce
new examples of key or inj-key subgroups which are not co-minimal.

		\bibliographystyle{plain}


\end{document}